
\documentclass[11pt]{amsart}
\usepackage{verbatim}


\newtheorem{theorem}{Theorem}[section]
\newtheorem{proposition}[theorem]{Proposition}
\newtheorem{prop}[theorem]{Proposition}

\newtheorem{claim}[theorem]{Claim}

\newtheorem{thm}{Theorem}[section]

\newtheorem{defn}[thm]{Definition}

\newcommand{\ZZ}{\mathbb{Z}}
\newtheorem{lem}[thm]{Lemma}
\newtheorem{cor}[theorem]{Corollary}
\newtheorem{question}[theorem]{Question} 
\newtheorem{definition}[theorem]{Definition}

\newtheorem{conjecture}[theorem]{Conjecture}

\theoremstyle{plain}
\numberwithin{equation}{theorem}

\theoremstyle{remark}
\newtheorem*{notation}{Notation}
\newtheorem{remark}[theorem]{Remark}
\newtheorem{example}[theorem]{Example}

\newcommand{\C}{{\mathbb C}}

\DeclareMathOperator{\Id}{Id}

\newcommand{\N}{{\mathbb N}}

\newcommand{\Z}{{\mathbb Z}}
\newcommand{\bA}{{\mathbb A}}

\newcommand{\cX}{{\mathcal X}}
\newcommand{\bcX}{{\overline \cX}}
\newcommand{\D}{{\mathbb D}}

\newcommand{\Dbar}{\overline{\D}}

\newcommand{\cOO}{\overline{\cO}}

\def\Frac{\operatorname{Frac}}

\newcommand{\bcP}{\overline \Phi}
\DeclareMathOperator{\Spec}{Spec}

\DeclareMathOperator{\fa}{\mathfrak{a}}
\DeclareMathOperator{\fb}{\mathfrak{b}}
\DeclareMathOperator{\fc}{\mathfrak{c}}

\newcommand{\bP}{{\mathbb P}}
\newcommand{\bZ}{{\mathbb Z}}

\newcommand{\bC}{{\mathbb C}}

\newcommand{\cA}{{\mathcal A}}
\newcommand{\bQ}{{\mathbb Q}}
\newcommand{\bF}{{\mathbb F}}

\newcommand{\lra}{\longrightarrow}

\newcommand{\fm}{\mathfrak m}
\newcommand{\fmh}{\widehat \fm}
\newcommand{\fmb}{\overline \fm}
\newcommand{\fp}{\mathfrak p}
\newcommand{\fq}{\mathfrak q}
\newcommand{\cO}{\mathcal{O}}
\newcommand{\cF}{\mathcal{F}}

\newcommand{\cV}{\mathcal{V}}

\newcommand{\smooth}{{\rm smooth}}

\newcommand{\Oxh}{\widehat \cO_{\cX,x}}
\newcommand{\Ox} {\cO_{\cX,x}}

\title[The dynamical Mordell-Lang problem]{The dynamical Mordell-Lang
  problem for \'{e}tale maps}
\author{J.~P.~Bell, D.~Ghioca, and T.~J.~Tucker}
\keywords{Mordell-Lang conjecture, dynamics}
\subjclass[2000]{Primary 14K12,Plea Secondary 37F10}

\address{Jason Bell\\
Department of Mathematics\\
Simon Fraser University\\
Burnaby, BC V5A 1S6\\
}

\email{jpb@math.sfu.ca}

\address{
Dragos Ghioca \\
Department of Mathematics \& Computer Science\\
University of Lethbridge \\
Lethbridge, AB T1K 3M4 
}

\email{dragos.ghioca@uleth.ca}

\address{
Thomas Tucker\\
Department of Mathematics\\
Hylan Building\\
University of Rochester\\
Rochester, NY 14627
}

\email{ttucker@math.rochester.edu}

\begin{document}

\begin{abstract}
  We prove a dynamical version of the Mordell-Lang conjecture for
  \'{e}tale endomorphisms of quasiprojective varieties. We use $p$-adic methods inspired by the work of Skolem, Mahler, and Lech, combined with methods from algebraic geometry. As special cases of our result we obtain a new proof of the classical Mordell-Lang conjecture for cyclic subgroups of a semiabelian variety, and we also answer positively a question of Keeler/Rogalski/Stafford for critically dense sequences of closed points of a Noetherian integral scheme.
\end{abstract}
\thanks {The second author was partially supported by NSERC.
The third author was partially supported by NSA
    Grant 06G-067 and NSF Grant DMS-0801072.}

\maketitle

\section{Introduction}
\label{intro}

Let $X$ be a quasiprojective variety over the complex numbers $\bC$, let $\Phi: X \lra
X$ be a morphism, and let $V$ be a closed subvariety of
$X$.  For any integer $i\geq 0$, denote by $\Phi^i$ the
$i^{\text{th}}$ iterate $\Phi\circ\cdots\circ\Phi$; for any point $\alpha\in X(\bC)$, we let $\cO_{\Phi}(\alpha):=\{\Phi^i(\alpha)\text{ : }i\in\N\}$ be the (forward) $\Phi$-orbit of $\alpha$.
If $\alpha \in X(\bC)$ has the property that there is some
integer $\ell\geq 0$ such that $\Phi^\ell(\alpha) \in W(\bC)$,
where $W$ is a periodic subvariety of $V$, then there are infinitely
many integers
$n\geq 0$ such that $\Phi^n(\alpha) \in V$.  More precisely, if
$N\geq 1$ is the period of $W$ (the smallest positive integer $j$ for which
$\Phi^j(W) = W$), then $\Phi^{kN + \ell}(\alpha) \in W(\bC)
\subseteq V(\bC)$ for all integers $k\geq 0$. In \cite{Denis-dynamical}, Denis asked the following question.

\begin{question}\label{general}
  If there are infinitely many nonnegative integers $m$ such that
  $\Phi^m(\alpha) \in V(\bC)$, are there necessarily integers $N\geq
  1$ and $\ell\geq 0$ such that $\Phi^{kN + \ell}(\alpha) \in V(\bC)$
  for all integers $k\geq 0$?
\end{question}
Note that if $V(\bC)$ contains an infinite set of the form $\{\Phi^{kN+\ell}(\alpha)\}_{k\in\N}$ for some positive integers $N$ and $\ell$, then $V$ contains a positive dimensional subvariety invariant under $\Phi^N$ (simply take the union of the positive dimensional components of the Zariski closure of $\{\Phi^{kN+\ell}(\alpha)\}_{k\in\N}$).

Denis \cite{Denis-dynamical} showed that the answer to
Question~\ref{general} is ``yes'' under the additional hypothesis that
the integers $n$ for which $\Phi^n(\alpha) \in V(\bC)$ are
sufficiently dense in the set of all positive integers; he also
obtained results for automorphisms of projective space without using
this additional hypothesis.  Later the problem was solved completely
in \cite{Bell} in the case of automorphisms of affine varieties $X$,
by showing that the set of all $n\in\N$ such that $\Phi^n(\alpha)\in
V(\bC)$ is a union of at most finitely many arithmetic progressions,
and of at most finitely many numbers.  In \cite{newlog}, the following
conjecture was proposed.
\begin{conjecture}
\label{dynamical M-L}
Let $X$ be a quasiprojective variety defined over $\C$, let $\Phi:X\lra X$ be an endomorphism, let $V$ be a subvariety of $X$, and let $\alpha\in X(\C)$. Then the intersection $V(\C)\cap \cO_{\Phi}(\alpha)$ is a union of at most finitely many orbits of the form $\cO_{\Phi^N}(\Phi^{\ell}(\alpha))$, for some nonnegative integers $N$ and $\ell$.
\end{conjecture}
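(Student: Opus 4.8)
The plan is to attack Conjecture \ref{dynamical M-L} by the $p$-adic method of Skolem, Mahler, and Lech, which seeks a $p$-adic analytic parametrization of the orbit. First I would put the data in a convenient normal form. If $\cO_{\Phi}(\alpha)$ is finite the statement is trivial, so assume it is infinite; then $i\mapsto \Phi^i(\alpha)$ is injective. Replacing $X$ by the Zariski closure of the orbit we may assume $X$ is the closure of a single orbit, and, by Noetherian induction on an affine open cover (if the orbit leaves an affine open $U$ infinitely often, recurse on the lower-dimensional boundary $X\setminus U$; otherwise shift by a suitable $\Phi^{\ell_0}$ so the orbit stays in $U$), we may assume $X$ is affine. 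Decomposing $V$ into irreducible components reduces to $V$ irreducible, cut out in $X$ by regular functions $f_1,\dots,f_r$. Spreading the whole situation out over a finitely generated $\Z$-subalgebra $R\subset\C$ yields a morphism of affine $R$-schemes together with a section; for all but finitely many maximal ideals $\fp\subset R$ the reduction is well-defined, and we let $K$ be the $\fp$-adic completion of $\Frac(R)$ — a finite extension of $\Q_p$ — with ring of integers $\fo$, maximal ideal $\fm$, and residue field $\Fq$.

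Next I would construct the analytic uniformization for a suitable iterate. The reduction $\bar\alpha$ is a point of a scheme over $\Fq$, so its $\bar\Phi$-orbit is finite and eventually periodic; after replacing $\alpha$ by $\Phi^{\ell_1}(\alpha)$ we may assume $\bar\alpha$ is purely periodic, say $\bar\Phi^{M_0}(\bar\alpha)=\bar\alpha$. Then every point $\Phi^{M_0 n}(\alpha)$ reduces to $\bar\alpha$, hence lies in the residue polydisk $D\cong\fm^{g}$ around $\alpha$, where $g=\dim X$. The desired conclusion is that, after possibly passing to a further iterate $\Phi^{M}$ with $M$ a multiple of $M_0$, the map $\Phi^{M}$ carries $D$ into $D$ by a power series whose linear part is $\equiv\Id\pmod{\fm}$, and consequently $n\mapsto\Phi^{Mn}(\alpha)$ extends to a $\fp$-adic analytic map $\gamma\colon\fo\lra X(K)$ with $\gamma(n)=\Phi^{Mn}(\alpha)$ for all $n\in\N$. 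Granting this, the return set $\{\,n\in\fo:f_1(\gamma(n))=\cdots=f_r(\gamma(n))=0\,\}$ is an intersection of zero loci of analytic functions on $\fo\cong\Z_p^{[K:\Q_p]}$; by Weierstrass preparation (Strassmann's bound) each such locus is either all of $\fo$ or finite. Intersecting with $\N$ and unwinding the reductions shows that $\{\,n\in\N:\Phi^n(\alpha)\in V(\C)\,\}$ is a finite union of arithmetic progressions together with a finite set, which is exactly the assertion of Conjecture \ref{dynamical M-L}.

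The main obstacle — and the reason the statement remains a conjecture — is producing the uniformization $\gamma$ with no regularity hypothesis on $\Phi$. When $\Phi$ is \'etale the differential $d\Phi$ is everywhere invertible, so for almost all $\fp$ the Jacobian of $\bar\Phi^{M_0}$ at $\bar\alpha$ lies in $\GL(g,\Fq)$; raising $M_0$ to the order of that matrix makes the Jacobian of $\Phi^{M}$ at $\alpha$ congruent to $\Id$ modulo $\fm$, the orbit is trapped in the single polydisk $D$, and the $\fm$-adic estimates forcing convergence of the series for $\gamma$ go through. For a general endomorphism $d\Phi$ may vanish along a ramification locus, the Jacobian at $\bar\alpha$ need not be invertible so the matrix-order trick fails, the orbit may migrate among residue disks in an uncontrolled way, and the naive power series for $\gamma$ need not converge; thus the method as developed above delivers only the \'etale case of the conjecture (and, with it, Mordell--Lang for cyclic subgroups of semiabelian varieties and the Keeler--Rogalski--Stafford statement). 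Closing the gap would require controlling the orbit near the points where $\Phi$ is ramified: natural attempts are to pass to the normalization, to blow up along the critical locus and the Zariski closure of the orbit so as to improve the local structure at the relevant points, or to stratify according to how often the orbit meets the bad locus — but guaranteeing that any of these procedures terminates, i.e. eventually reaches a situation where the $p$-adic parametrization exists, is precisely the difficulty that the general conjecture leaves open.
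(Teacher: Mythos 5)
The statement you were asked about is a \emph{conjecture} in the paper: the authors do not prove Conjecture~\ref{dynamical M-L} in general, only its \'etale case (Theorem~\ref{real main result}), and you correctly refrain from claiming more than that. Your sketch of what the $p$-adic Skolem--Mahler--Lech method actually delivers is essentially the paper's own strategy for the \'etale case: spread out over a finitely generated $\Z$-algebra, reduce modulo a suitable prime, replace $\alpha$ by an iterate lying in a periodic residue class, observe that unramifiedness makes the Jacobian of the induced self-map of the residue polydisk invertible modulo the maximal ideal, pass to a further iterate so the linear part is congruent to the identity, interpolate $n\mapsto\Phi^{Mn}(\alpha)$ by a $p$-adic analytic arc, and finish with the fact that a nonzero analytic function has finitely many zeros on a compact disk. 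Your diagnosis of why the method stalls for general (ramified) $\Phi$ is also the correct one and is consistent with the paper leaving the general statement open.

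Three points in your sketch deserve correction, though none affects the overall assessment. First, ``let $K$ be the $\fp$-adic completion of $\Frac(R)$ --- a finite extension of $\Q_p$'' is wrong when $\Frac(R)$ has positive transcendence degree over $\Q$ (the generic situation over $\C$); what is needed, and what the paper uses, is Bell's embedding lemma (Lech's trick, \cite[Lemma 3.1]{Bell}) producing an embedding of $\Frac(R)$ into $\Q_p$ itself with the finitely many relevant elements landing in $\Z_p$ and with $p\ge 5$, which also keeps the convergence estimates in the interpolation step clean (over a ramified extension the comparison of coefficient divisibilities with $1/|k!|_p$ would need extra care). Second, your reduction to affine $X$ by ``recursing on the boundary $X\setminus U$'' does not make sense as stated, since $\Phi$ need not restrict to a self-map of $X\setminus U$, so there is no smaller dynamical system to induct on; the paper avoids this entirely by working with a quasiprojective model inside $\bP^M_{\Spec R}$ and with homogeneous generators of the ideal of $V$, and it handles singular $X$ by induction on dimension using the fact that an \'etale map preserves the smooth and singular loci. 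Third, the existence of the analytic arc $\gamma$ is exactly the technical heart of the paper (Theorem~\ref{thm: padic}, proved by an approximation scheme with Mahler polynomials and degree bounds, where the hypothesis $p>3$ enters), and your sketch simply grants it; that is acceptable in an outline, but it is the one step that cannot be waved through.
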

Note that the orbits for which $N=0$ are singletons, so that the
conjecture allows not only infinite forward orbits but also finitely
many extra points. We view our Conjecture~\ref{dynamical M-L} as a
dynamical version of the classical Mordell-Lang conjecture, where
subgroups of rank one are replaced by orbits under a morphism. 

Results were obtained in the case when
$\Phi: \bA^2 \lra \bA^2$ takes the form $(f,g)$ for $f,g \in \bC[t]$ and the subvariety $V$ is a line (\cite{Mike}), and in the case when
$\Phi:\bA^g \lra \bA^g$ has the form $(f,\dots,f)$ where $f \in K[t]$ (for a number field $K$)
has no periodic critical points other than the point at infinity
(\cite{Par}). Also, in \cite{newlog} a general approach to Conjecture~\ref{dynamical M-L} was developed in the case the orbit $\cO_{\Phi}(P)$ intersects a sufficiently small $p$-adic neighborhood of a $\Phi$-periodic point of $X$ where the Jacobian of $\Phi$ is diagonalizable. Furthermore, in \cite{newlog}, Conjecture~\ref{dynamical M-L} was proved if $X$ is a semiabelian variety and $\Phi$ is an algebraic group endomorphism.

The technique used in \cite{Bell}, \cite{Par} and \cite{newlog} is a modification of
a method first used by Skolem \cite{Skolem} (and later extended by
Mahler \cite{Mahler-2} and Lech \cite{Lech}) to treat linear
recurrence sequences.  The idea is
to show that there is a positive integer $N$ such that for each $i = 0,\dots, N-1$ there is a $p$-adic analytic map
$\theta_i$ on $\bZ_p$ such that $\theta_i(k) =
\Phi^{kN + i}(\alpha)$ for all sufficiently large integers $k$.  Given any polynomial
$F$ in the vanishing ideal of $V$, one thus obtains a $p$-adic
analytic function $F \circ \theta_i$ that vanishes on all $k$ for which
$\Phi^{kN + i}(\alpha) \in V$.  Since an analytic function cannot have
infinitely many zeros in a compact subset of its domain of convergence unless that
function is identically zero, this implies that if there are
infinitely many $n \equiv i \pmod{N}$ such that $\Phi^n(\alpha) \in V$,
then $\Phi^{kN + i}(\alpha) \in V$ for all $k$ sufficiently large.

In the case of \cite{Bell}, the existence of the $p$-adic analytic maps $\theta_i$ is proved using properties of automorphisms of the affine plane, such as having constant determinant for their Jacobian. In \cite{Par}, the existence of the $p$-adic analytic maps
$\theta_i$ is proved by using linearizing maps developed by
Rivera-Letelier \cite{Riv1}, while in \cite{newlog}, the existence of the $\theta_i$'s is proved using work of Hermann and Yoccoz (see \cite{HY}).
In this paper, using methods from arithmetic geometry and $p$-adic analysis we surpass all of the above results, and we prove Conjecture~\ref{dynamical M-L} in the case $\Phi$ is any \'{e}tale map. 

\begin{theorem}
\label{real main result}
Let $\Phi:X \lra X$ be an \'{e}tale endomorphism of any quasiprojective
variety defined over $\bC$. Then for any subvariety $V$ of $X$, and for any point $\alpha\in X(\bC)$ the intersection $V(\C)\cap \cO_{\Phi}(\alpha)$ is a union of at most finitely many orbits of the form $\cO_{\Phi^N}(\Phi^{\ell}(\alpha))$ for some $N,\ell\in\N$.
\end{theorem}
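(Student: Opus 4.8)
**The plan is to reduce to a $p$-adic analyticity statement via the Skolem–Mahler–Lech method, using étaleness to get the crucial good-reduction input.** First I would reduce to the case where $X$ is affine, irreducible, and everything — $X$, $\Phi$, $V$, $\alpha$ — is defined over a number field $K$ (a standard spreading-out argument: the orbit and the intersection data live over a finitely generated $\bZ$-algebra, and one can specialize to a number field without losing infinitely many return times, provided one is careful; alternatively embed the finitely generated field into $\bC_p$ for suitable $p$). Having done this, the problem becomes: show the set $\{n \in \N : \Phi^n(\alpha) \in V\}$ is a finite union of arithmetic progressions and finite sets. As explained in the introduction, it suffices to produce, for some modulus $N$ and each residue $i$, a $p$-adic analytic map $\theta_i \colon \bZ_p \to X(\bC_p)$ with $\theta_i(k) = \Phi^{kN+i}(\alpha)$ for all large $k$; then composing with any element of the vanishing ideal of $V$ and invoking the fact that a nonzero $p$-adic analytic function on $\bZ_p$ has finitely many zeros finishes the argument.

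**The heart of the matter is constructing $\theta_i$, and this is where étaleness enters.** Choose a prime $p$ of good reduction, meaning: there is a model $\cX$ over (a localization of) $\cO_K$, with $\Phi$ extending to an étale $\cO_K$-endomorphism, and $\alpha$ extending to an $\cO_K$-point; here I would need a lemma (presumably proved earlier via Hermite–Minkowski / spreading-out) that such a $p$ exists. Because $\Phi \bmod \fp$ is étale on a finite-type scheme over a finite field, the induced map on the (finite) set of $\F_q$-points is a bijection, so the reduction $\bar\alpha$ is preperiodic; replacing $\alpha$ by $\Phi^\ell(\alpha)$ and passing to an iterate $\Phi^N$ we may assume $\bar\alpha$ is fixed, i.e. $\alpha$ lies in the residue disc $\cU$ of a fixed point $\bar\alpha$. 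On $\cU$ we have local coordinates making $\cU \cong \fm_p^d$ (the open polydisc), and $\Phi$ restricts to an analytic self-map of $\cU$ fixing (or at least stabilizing) this disc. The Jacobian of $\Phi$ at the fixed point is a matrix $A \in \GL_d(\cO_p)$ reducing to an invertible matrix over $\F_q$ — this invertibility of $A$ is exactly what étaleness buys, and it is the reason a logarithm/linearization is available. After a further iteration $\Phi \mapsto \Phi^M$ (to make the eigenvalues of $A$ congruent to $1$ and the map $p$-adically close to the identity), one can interpolate the iterates: the map $k \mapsto \Phi^{k}$ restricted to $\cU$ extends to an analytic family over $\bZ_p$, so $\theta(k) := \Phi^k(\alpha)$ is $p$-adic analytic in $k$. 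The cleanest route is to show $\Phi|_\cU$, after the iteration, is analytically conjugate to its linear part $z \mapsto Az$ with $A \equiv \Id$ (so $A = \exp(B)$ with $B$ topologically nilpotent), whence $\Phi^k$ corresponds to $\exp(kB)$, manifestly entire in $k$; this is the analogue of the Hermann–Yoccoz input used in \cite{newlog}, but here the needed hypothesis (invertible Jacobian) is automatic from étaleness rather than assumed.

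**The main obstacle is the construction of the interpolating analytic map under only the étaleness hypothesis**, i.e. without diagonalizability of the Jacobian. One has to handle a general invertible $A$ over $\cO_p$: its reduction is invertible, but its eigenvalues (in $\Qbar_p$) need not be roots of unity times units congruent to $1$, and $A$ need not be semisimple. The fix is to pass to a finite extension of $\Q_p$ and a high iterate so that all eigenvalues lie in a single residue class of $1$ and $A$ is unipotent-times-identity close enough to apply the $p$-adic logarithm to the linear part and to the coordinate-change power series; one must check convergence of the conjugating series (a small-denominators / Newton-polygon estimate) and that the conjugacy is defined on a possibly smaller but still nonempty subdisc containing $\alpha$ after finitely many more iterates of $\Phi$ pushing $\alpha$ inward. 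A secondary technical point is the descent from $\bC$ to a number field at the very beginning: one must ensure that infinitely many return indices survive specialization, which is handled by choosing the specialization to be compatible with a chosen place $p$ and noting that the return set over $\bC$ already forces the return set over $K$ (or over $\bC_p$) to be infinite. Finally, bookkeeping: the finitely many residues $i$ mod the total modulus $N' = N M M'$, plus finitely many small-$k$ exceptions, assemble into the finitely many orbits $\cO_{\Phi^{N'}}(\Phi^{\ell}(\alpha))$ claimed, and the singleton orbits ($N=0$) absorb the sporadic points where the analytic function vanished without vanishing identically.
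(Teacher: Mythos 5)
Your overall architecture --- spread out, embed into $\bZ_p$ with good étale reduction, use finiteness of the special fiber to put $\alpha$ in a periodic residue class, note that étaleness forces the Jacobian to be invertible mod $p$, pass to an iterate with $\cF\equiv \mathrm{id}\pmod p$, and then interpolate $k\mapsto \Phi^{kN+i}(\alpha)$ by a $p$-adic analytic function --- is exactly the paper's. But the mechanism you propose for the decisive step, the construction of the interpolating map, does not work. You suggest analytically conjugating $\Phi$ on the residue disc to its linear part $z\mapsto Az$ and then writing $A^k=\exp(kB)$. The problem is that after the very iteration you perform to make $A\equiv \Id\pmod p$, all eigenvalues of $A$ are congruent to $1$, which is precisely the maximally resonant situation: the homological equation for the conjugacy has denominators of the form $\lambda_1^{k_1}\cdots\lambda_g^{k_g}-\lambda_j$, which are now all small or zero, so no Newton-polygon or small-denominators estimate can save the conjugating series. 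Already in one variable, $\varphi(x)=x+px^2$ has linear part the identity and is certainly not conjugate to it, yet it is exactly the kind of map one must handle. This is why the Hermann--Yoccoz linearization route of \cite{newlog} required a diagonalizability/non-resonance hypothesis, and why the present paper replaces linearization by a direct construction (Theorem~\ref{thm: padic}): one builds $f$ with $f(z+1)=\cF(f(z))$ by successive approximation in the Mahler basis $\binom{z}{k}$, exploiting that $\cF\equiv\mathrm{id}\pmod p$ and that the degree-$d$ coefficients of $\cF$ lie in $p^{d-1}\bZ_p$ to control the degrees of the correction polynomials and hence the decay of the Mahler coefficients (this is where $p\ge 5$ is used). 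Without this or some equivalent substitute, your proof has a hole at its center.

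Two secondary points. First, your primary reduction ``to a number field'' is not available: the data is only definable over a finitely generated $\bZ$-algebra $R$ of possibly positive transcendence degree, and specializing could destroy the intersection $V\cap\cO_\Phi(\alpha)$; the correct move (which you mention only as an alternative) is to embed $\Frac(R)$ into $\bQ_p$ via the Lech/Bell lemma, which loses nothing. Second, you never treat non-smooth $X$: your local coordinates $\cU\cong\fm^d$ on the residue disc require the reduction of $\alpha$ to be a smooth point of the model. The paper handles this by noting that an étale map preserves both the smooth locus and its complement, so the orbit lies entirely in one of them, and one inducts on $\dim X$; some such reduction is needed to make your argument apply to an arbitrary quasiprojective $V\subset X$.
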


Our result provides a positive answer to a question raised in
\cite{KeeRogSta} regarding critically dense orbits under automorphisms
of integral Noetherian schemes (see
Section~\ref{applications}). Theorem~\ref{real main result} has the
following interesting corollary.
\begin{cor}
  \label{alternative formulation}
  Let $X$ be an irreducible quasiprojective variety, let $\Phi:X\lra
  X$ be an \'{e}tale endomorphism, and let $\alpha\in X(\C)$. If the
  orbit $\cO_{\Phi}(\alpha)$ is Zariski dense in $X$, then any proper
  subvariety of $X$ intersects $\cO_{\Phi}(\alpha)$ in at most
  finitely many points.
\end{cor}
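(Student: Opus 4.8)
The plan is to deduce Corollary~\ref{alternative formulation} formally from Theorem~\ref{real main result}; the substantive content lies entirely in the theorem, so the argument will be short. First I would dispose of the trivial case $\dim X = 0$, in which $X$ is a single point whose only proper subvariety is empty. Assume from now on that $\dim X \geq 1$, and fix a proper closed subvariety $V \subsetneq X$. By Theorem~\ref{real main result}, $V(\C) \cap \cO_{\Phi}(\alpha)$ is a union $\bigcup_{i=1}^{s} \cO_{\Phi^{N_i}}(\Phi^{\ell_i}(\alpha))$ of finitely many orbits; since the orbits with $N_i = 0$ are single points, it suffices to prove that none of the orbits occurring with $N_i \geq 1$ is infinite. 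So suppose some $C := \cO_{\Phi^{N}}(\Phi^{\ell}(\alpha))$ with $N \geq 1$ is infinite and is contained in $V$; I will derive a contradiction by showing that $C$ is already Zariski dense in $X$, which is incompatible with $C \subseteq V \subsetneq X$.

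The heart of the matter is that passing to an arithmetic sub-orbit does not destroy Zariski density, and for this I would use two elementary observations. The first is that deleting finitely many points from a Zariski-dense subset of a positive-dimensional irreducible variety leaves a Zariski-dense subset (otherwise the closure of what remains, together with those finitely many points, would be a proper closed set containing the original dense subset). Applying this to $\cO_{\Phi}(\alpha) \setminus \{\alpha\} \subseteq \Phi(\cO_{\Phi}(\alpha)) \subseteq \Phi(X)$ yields $\overline{\Phi(X)} = X$. The second is the decomposition $\cO_{\Phi}(\alpha) = \bigcup_{j=0}^{N-1} P_j$ with $P_j := \cO_{\Phi^{N}}(\Phi^{j}(\alpha)) = \{\Phi^{kN+j}(\alpha) : k \in \N\}$, for which $\Phi(P_j) \subseteq P_{j+1 \bmod N}$ (with equality unless $j = N-1$). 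From $\bigcup_{j} \overline{P_j} = \overline{\cO_{\Phi}(\alpha)} = X$ and irreducibility of $X$ we get $\overline{P_{j_0}} = X$ for some $j_0$; and using $\overline{\Phi(A)} = \overline{\Phi(\overline{A})}$ together with $\overline{\Phi(X)} = X$, the inclusion $\Phi(P_j) \subseteq P_{j+1 \bmod N}$ shows that $\overline{P_j} = X$ forces $\overline{P_{j+1 \bmod N}} = X$. Iterating around the cycle of residues modulo $N$ gives $\overline{P_j} = X$ for every $j$.

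To conclude, I would write $\ell = qN + r$ with $0 \leq r < N$, so that $C$ is $P_r$ with the finitely many points $\Phi^{mN+r}(\alpha)$ ($0 \leq m < q$) deleted; by the first observation and $\overline{P_r} = X$ we get $\overline{C} = X$, the contradiction sought. I do not anticipate any real obstacle: the only point needing genuine (if slight) care is the propagation of density around the residue cycle, which is exactly what the identity $\overline{\Phi(X)} = X$ delivers. It is worth noting that this deduction uses nothing about $\Phi$ beyond its being a morphism — the étale hypothesis enters only through Theorem~\ref{real main result}.
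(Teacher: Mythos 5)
Your proof is correct, and it reaches the conclusion by a genuinely different (though equally elementary) route from the paper's. The paper deduces the corollary by the same argument it gives for the Keeler--Rogalski--Stafford question: if $V$ contains an infinite sub-orbit $\cO_{\Phi^{k}}(\Phi^{\ell}(\alpha))$ with $k\ge 1$, then all but finitely many points of $\cO_{\Phi}(\alpha)$ lie in $\bigcup_{i=0}^{k-1}\Phi^{i}(V)$; taking closures and using irreducibility of $X$, some $\overline{\Phi^{i}(V)}$ equals $X$, which forces $\dim V=\dim X$ and hence $V=X$, a contradiction. In other words, the paper transports the \emph{subvariety} forward under $\Phi$, whereas you transport the \emph{orbit pieces}: you first establish that $\Phi$ is dominant ($\overline{\Phi(X)}=X$, extracted from density of the orbit), then propagate Zariski density around the residue classes $P_{j}=\cO_{\Phi^{N}}(\Phi^{j}(\alpha))$, $0\le j<N$, to conclude that every infinite arithmetic sub-orbit is itself dense in $X$ and so cannot sit inside a proper $V$. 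Your version is marginally longer but isolates a slightly stronger and reusable intermediate fact (every sub-progression of a dense forward orbit is dense, i.e.\ the critical-density phenomenon at the level of progressions), and, as you note, it uses nothing about $\Phi$ beyond its being a morphism once Theorem~\ref{real main result} is granted --- the same is true of the paper's argument. All the small points you flag (deleting finitely many points preserves density on a positive-dimensional irreducible variety, $\Phi(P_{N-1})\subseteq P_{0}$ only up to the point $\alpha$, the reduction of $\ell$ modulo $N$) are handled correctly.
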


Our result fits into Zhang's far-reaching system of dynamical
conjectures (see \cite{ZhangLec}). Zhang's conjectures include
dynamical analogues of Manin-Mumford and Bogomolov conjectures for
abelian varieties (now theorems of \cite{Raynaud1,Raynaud2},
\cite{Ullmo} and \cite{Zhang}). One of the conjectures from
\cite{ZhangLec} asks that any irreducible projective variety $X$
defined over a number field $K$ has a point in $X({\overline K})$ with
a Zariski dense orbit under any ``polarizable'' endomorphism $\Phi$
(Zhang defines a polarizable endomorphism $\Phi$ as one for which there
exists an ample line bundle $L$ such that $\Phi^* L \cong L^{\otimes
  r}$ for some $r>1$).  Note in particular that any Zariski dense
orbit must avoid all proper $\Phi$-periodic subvarieties of $X$.
Theorem~\ref{real main result} says that \emph{any} subvariety of $X$
containing infinitely many points of a $\Phi$-orbit \emph{must}
contain a $\Phi$-periodic subvariety. We hope that Theorem~\ref{real
  main result} represents real progress towards proving
Conjecture~\ref{dynamical M-L}.

We now briefly sketch the plan of our paper. In Section~\ref{geometry} we present the geometric setup, while in Section~\ref{analytic} we derive the existence of certain analytic functions which are used later to prove Theorem~\ref{main result}. In particular, our results from Section~\ref{analytic} provide generalizations of Rivera-Letelier's results \cite{Riv1} regarding analytic conjugation maps corresponding to quasiperiodic domains of rational $p$-adic functions. In Section~\ref{main proof} we prove our main result, while in Section~\ref{applications} we present several interesting applications of our Theorem~\ref{real main result} for automorphisms of Noetherian integral schemes. In particular, we obtain a new proof of the classical Mordell-Lang conjecture for cyclic subgroups, and we provide a positive answer to \cite[Question 11.6]{KeeRogSta}.

\begin{notation} 
  We write $\N$ for the set of nonnegative integers. If $K$ is a
  field, we write $\overline{K}$ for an algebraic closure of $K$.
  Given a prime number $p$, we denote by $\mid\cdot\mid_p$ the usual
  absolute value on $\bQ_p$; that is, we have $|p|_p = 1/p$.  When we
  work in $\bQ_p^g$ with a fixed coordinate system, then, for
  $\vec{\alpha} = (\alpha_1\dots, \alpha_g) \in\bQ_p^g$ and $r>0$, we
  write $\D(\vec{\alpha},r)$ for the open disk of radius $r$ in $\bQ_p^g$
  centered at $\alpha$.  More precisely, we have
  $$
  \D(\vec{\alpha},r) := \{ (\beta_1, \dots, \beta_g) \in \bQ_p^g \;
  \mid \; \max_i |\alpha_i - \beta_i|_p < r \}.$$
  Similarly, we let
  $\Dbar(\vec{\alpha},r)$ be the {\it closed} disk of radius $r$
  centered at $\vec{\alpha}$.  In the case where $g=1$, we drop the
  vector notation and denote our discs as $\D(\alpha,r)$ and
  $\Dbar(\alpha,r)$.  We say that a function $F$ is {\it (rigid) analytic} on
  $\D(\alpha,r)$ (resp. $\Dbar(\alpha, r)$) if there is a power series
  $\sum_{n=0}^\infty a_n (z-\alpha)^n$, with coefficients in $\bQ_p$,
  convergent on all of $\D(\alpha,r)$ (resp.  $\Dbar(\alpha, r)$) such
  that $F(z) = \sum_{n=0}^\infty a_n (z-\alpha)^n$ for all $z \in
  \D(\alpha,r)$ (resp. $\Dbar(\alpha, r)$). Similarly, we define convergence of an analytic function $f$ on $\D(\vec{\alpha},r)$ (resp. $\Dbar(\vec{\alpha},r)$) where $\vec{\alpha}\in\bQ_p^g$ for any positive integer $g$.

Finally, all subvarieties in our paper are closed subvarieties.
\end{notation}

\section{Preliminary results from arithmetic geometry}
\label{geometry}

In this Section we construct the geometric setup for the proof of Theorem~\ref{main result} which is the main building block for proving Theorem~\ref{real main result}. More precisely, Theorem~\ref{main result} deals with the case when $\Phi$ is an unramified endomorphism of an irreducible smooth  quasiprojective variety $X$. In the proof of Theorem~\ref{main result} we will show that we may assume $X$ has a model $\cX$ over $\Z_p$ (for a suitable prime $p$) such that $\Phi:\cX\lra \cX$ is an unramified map. The goal of this Section is to construct a suitable analytic function associated to $\Phi$ which maps the residue class of a closed point $x\in\cX$ into itself.

\subsection{Notation}
\begin{itemize}
\item  $\cX$ is a
quasiprojective scheme over $\bZ_p$ such that both the generic fiber and
special fiber are geometrically irreducible varieties (over $\bQ_p$ and $\bF_p$, respectively);
\item $\bcX$ is the closed fiber of $\cX$ (i.e., it is $\cX
  \times_{\bZ_p} {\bF_p}$); 
\item $X$ is the generic fiber of $\cX$ (i.e., it is $\cX
  \times_{\bZ_p} {\bQ_p}$);
\item $\Phi: \cX \lra \cX$ is an unramified map of $\bZ_p$-schemes; 
\item $\bcP: \bcX \lra \bcX$ is the restriction of $\Phi$ on the closed fiber;
\item $r:\cX(\bZ_p) \lra \bcX(\bF_p)$ is the usual reduction map;
\item $x$ is an $\bF_p$-point on $\cX$ in the smooth locus of the
  projection to $\bZ_p$ such that there is a point $\alpha \in
  \cX(\bZ_p)$ for which $r(\alpha) = x$.

\end{itemize}

\subsection{Completions of local rings at smooth closed points}
  Let $\cO_{\cX,x}$ be the local ring of $x$ as a point on $\cX$ and let ${\widehat \cO_{\cX,x}}$ be the completion of
$\cO_{\cX,x}$ at its maximal ideal $\fm$; let $\fmh$ be the maximal ideal in $\cO_{\cX,x}$.  The Cohen structure
theorem (see \cite[Section 29]{Mats2} or \cite[Chapter IX]{Bour2}) then
gives the following.

\begin{prop}\label{regular}
There are elements $T_1,\dots,T_g$ of $\Oxh$ such that
$$ \Oxh = \bZ_p [[T_1, \dots, T_g]].$$
\end{prop}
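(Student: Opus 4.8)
The content here is the Cohen structure theorem applied to $\Oxh$ in the ``unramified'' mixed-characteristic case; the plan is to make this explicit by exhibiting $p$ as a member of a regular system of parameters of $\Oxh$ and then comparing associated graded rings with those of a power series ring over $\bZ_p$.

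\emph{Ring theory of $\Oxh$.} Since $x$ lies in the smooth locus of $\cX\to\Spec\bZ_p$, and $\bZ_p$-flatness forces the special fibre $\bcX$ to have the same dimension $g$ as the generic fibre $X$, the morphism $\cX\to\Spec\bZ_p$ is smooth of relative dimension $g$ near $x$. Hence $\Ox$ is a regular local ring of Krull dimension $g+1$ with residue field $\bF_p$ (because $x$ is an $\bF_p$-point), and its completion $\Oxh$ is a complete regular local ring of dimension $g+1$, with maximal ideal $\fm$ and residue field $\bF_p$; note $p\in\fm$. Reducing modulo $p$, the ring $\Oxh/p\Oxh$ is the completed local ring of $x$ on $\bcX$, which is regular local of dimension $g$; in particular $\dim_{\bF_p}\bigl(\fm/(p\Oxh+\fm^2)\bigr)=g$. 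Comparing this with $\dim_{\bF_p}\fm/\fm^2=g+1$ (regularity of $\Oxh$) shows $p\notin\fm^2$. Choosing $t_1,\dots,t_g\in\fm$ whose images form an $\bF_p$-basis of $\fm/(p\Oxh+\fm^2)$, we conclude that $(p,t_1,\dots,t_g)$ is a regular system of parameters of $\Oxh$.

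\emph{Identification with a power series ring.} Let $P=\bZ_p[[T_1,\dots,T_g]]$; it is a complete regular local ring of dimension $g+1$ with maximal ideal $\fm_P=(p,T_1,\dots,T_g)$ and residue field $\bF_p$. Because $\Oxh$ is $\fm$-adically complete with each $t_i\in\fm$, there is a unique continuous local $\bZ_p$-algebra homomorphism $\varphi\colon P\to\Oxh$ with $\varphi(T_i)=t_i$; it carries the basis $(\bar p,\bar T_1,\dots,\bar T_g)$ of $\fm_P/\fm_P^2$ to the basis $(\bar p,\bar t_1,\dots,\bar t_g)$ of $\fm/\fm^2$. Since $P$ and $\Oxh$ are regular local, their associated graded rings for the maximal-ideal filtrations are polynomial rings in $g+1$ variables over $\bF_p$, and the induced map $\operatorname{gr}(\varphi)$ is a graded homomorphism between them that is bijective in degree one, hence surjective (the target is generated in degree one) and then injective (both rings have Krull dimension $g+1$), so an isomorphism. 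As $P$ and $\Oxh$ are Noetherian complete local rings (hence separated and complete for their topologies) and $\varphi$ respects the filtrations, an isomorphism on associated graded rings lifts to an isomorphism $\varphi$. Renaming each $t_i$ as $T_i$ gives $\Oxh=\bZ_p[[T_1,\dots,T_g]]$, as claimed.

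The only step that is not pure commutative-algebra formalism is showing $p\notin\fm^2$ — equivalently, that $p$ belongs to a regular system of parameters of $\Oxh$. This is precisely where the hypothesis that $x$ is a smooth point of $\cX\to\Spec\bZ_p$ (so that both $\cX$ and its special fibre are regular at $x$, with $\cX$ flat over $\bZ_p$) is used, and it is exactly what places us in the unramified case of the Cohen structure theorem rather than the general one.
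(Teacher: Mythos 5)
Your proof is correct, and it reaches the same structural conclusion the paper does (the unramified case of the Cohen structure theorem, hinging on exhibiting $p$ as part of a regular system of parameters, i.e.\ $p\notin\fmh^2$), but it gets there by a genuinely different route in two respects. First, the paper deduces $p\notin\fmh^2$ from the hypothesis that $x$ lifts to a $\bZ_p$-point: the section gives a surjection $\hat g:\Oxh\surj\bZ_p$, and surjectivity forces $p\notin\fmh^2$. You instead deduce it from smoothness of $\cX\to\Spec\bZ_p$ at $x$, comparing $\dim_{\bF_p}\fmh/\fmh^2=g+1$ with $\dim_{\bF_p}\fmh/(p\Oxh+\fmh^2)=g$ coming from regularity of the special fibre; both hypotheses are part of the standing setup, so either is legitimate, though note your argument never uses the existence of the lift $\alpha$, whereas the paper's never uses regularity of $\bcX$ at $x$ beyond what smoothness of $x$ on $\cX$ already gives. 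Second, the paper then simply cites Matsumura (Theorem 29.7 plus the uniqueness of $\bZ_p$ as the complete $p$-ring with residue field $\bF_p$), while you reprove that case from scratch: you build the map $\bZ_p[[T_1,\dots,T_g]]\to\Oxh$ from a regular system of parameters containing $p$ and show it is an isomorphism by passing to associated graded rings. Your version is longer but self-contained and makes transparent exactly where smoothness over $\bZ_p$ enters; the paper's is shorter at the cost of leaning on the reference. One small caveat: your construction of $\varphi$ implicitly uses that the structure map $\bZ_p\to\Oxh$ is the canonical one (so that a $\bZ_p$-algebra map out of the power series ring is determined by the images of the $T_i$); this is fine here since $\cX$ is a $\bZ_p$-scheme, but it is worth saying, as it is the analogue of the paper's appeal to the uniqueness of the coefficient ring.
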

\begin{proof}
  Since $x$ is smooth, $\cO_{\cX,x}$ must be a regular ring.
  Therefore, its completion $\Oxh$ must be regular as well by
  \cite[Proposition 11.24]{AM}.  The existence of a point in
  $\cX(\bZ_p)$ that reduces to $x$ means that there is a surjective
  map $g:\cO_{\cX,x} \lra \bZ_p$, which extends to a map ${\hat g}:
  \Oxh \lra \bZ_p$. Because ${\hat g}$ is surjective, we conclude that
  $p\notin\fmh^2$; thus $\Oxh$ is unramified, in the terminology of
  \cite[Section 29]{Mats2}.  Theorem $29.7$ of \cite{Mats2} states
  that any unramified complete regular Noetherian ring of
  characteristic $0$ with a finite residue field is a formal power
  series ring over a complete $p$-ring. By \cite[Corollary,
  p. 225]{Mats2}, $\bZ_p$ is the only complete $p$-ring with residue
  field $\bF_p$ (note that $\cO_{\cX,x}$ has residue field $\bF_p$
  because we have a surjective map, induced by $g$, from the residue
  field of $\cO_{\cX,x}$ onto $\bF_p$).
\end{proof}

There is a one-to-one correspondence between the points in
$\cX(\bZ_p)$ that reduce to $x$ and the primes $\fp$ in $\Ox$ such
that $\Ox/\fp \cong \bZ_p$.  For each such prime $\fp$, its completion
${\hat \fp}$ in $\Oxh$ has the property that $\fp \Oxh = {\hat \fp}$ (see
\cite[Theorem 8.7]{Mats2}).  Furthermore, ${\hat \fp}$ is a prime ideal in $\Oxh$
with residue domain $\bZ_p$
since the sequence
$$ 0 \lra {\hat \fp} \lra \Oxh \lra \bZ_p \lra 0$$
is exact; this follows from the fact that $\Oxh$ is flat over $\Ox$
(\cite[Theorem 8.8]{Mats2}) along with the fact that the quotient $\Ox
/ \fp \cong \bZ_p$ is complete with respect to the $\fm$-adic
topology.  Thus, if $\fq$ is any prime in $\Oxh$ with residue domain
$\bZ_p$ then $\fq$ must be the completion of $\fq \cap \Ox$, because
$\dim \Ox = \dim \Oxh$ (\cite[Corollary 11.19]{AM}).  Hence, we have a
one-to-one correspondence between the points in $\cX(\bZ_p)$ that
reduce to $x$ and the primes $\fq$ in $\Oxh$ such that $\Oxh/\fq \cong
\bZ_p$.  Note that primes $\fq$ in $\Oxh$ for which $\Oxh/\fq \cong
\bZ_p$ are simply the ideals of the form $(T_1 - p z_1, \dots, T_g - p
z_g)$ where the $z_i$ are in $\bZ_p$.  For each $\bZ_p$-point $\beta$
in $\cX$ such that $r(\beta) = x$, we write $\iota(\beta) =
(\beta_1,\dots, \beta_g)$ where $\beta$ corresponds to the prime ideal
$$ (T_1 - p \beta_1, \dots, T_g - p \beta_g)$$
in $\Oxh$. Note that $\iota^{-1}:\bZ_p^g\lra \cX(\bZ_p)$ induces an analytic bijection between $\bZ_p^g$ and the analytic neighborhood of $\cX(\bZ_p)$ consisting of points $\beta$ such that $r(\beta)=x$.

\begin{prop}\label{power}
  Suppose that $\bcP(x) = x$.  Then there are power series $F_1,
  \dots, F_g \in \bZ_p[[U_1, \dots, U_g]]$ such that
\begin{enumerate}
\item each $F_i$ converges on $\bZ_p^g$;
\item for each $\beta\in\cX(\bZ_p)$ such that $r(\beta)=x$, we have
\begin{equation}\label{power eq}
i(\Phi(\beta)) =   (F_1(\beta_1,\dots, \beta_g),\dots, F_g(\beta_1,
\dots, \beta_g)); \text { and }
\end{equation}
\item each $F_i$ is congruent to a linear polynomial mod $p$ (in
  other words, all the coefficients of terms of degree greater than one
  are divisible by $p$).   
\end{enumerate}
\end{prop}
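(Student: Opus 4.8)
The plan is to obtain the $F_i$ as the coordinate functions of the endomorphism that $\Phi$ induces on the completed local ring $\Oxh$, after rescaling the variables by a factor of $p$. Since $\bcP(x)=x$, the morphism $\Phi\colon\cX\to\cX$ sends the closed point $x$ to itself and therefore induces a local $\bZ_p$-algebra homomorphism $\psi\colon\Ox\to\Ox$; as $\psi$ is local it is $\fm$-adically continuous, so it extends to the completion, and by Proposition~\ref{regular} this yields a local homomorphism
\[
\widehat\psi\colon\ \bZ_p[[T_1,\dots,T_g]]\ \lra\ \bZ_p[[T_1,\dots,T_g]].
\]
I would then set $G_i:=\widehat\psi(T_i)$ and write $G_i=\sum_{\nu}a_{i,\nu}T^{\nu}$ with each $a_{i,\nu}\in\bZ_p$. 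Because $\widehat\psi$ is local, $G_i$ lies in the maximal ideal $(p,T_1,\dots,T_g)$ of $\Oxh$, so its constant term $a_{i,\mathbf{0}}$ is divisible by $p$.

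Next I would define $F_i(U_1,\dots,U_g):=p^{-1}G_i(pU_1,\dots,pU_g)=\sum_{\nu}a_{i,\nu}\,p^{|\nu|-1}\,U^{\nu}$. The constant coefficient $a_{i,\mathbf{0}}/p$ lies in $\bZ_p$ by the previous step, the degree-one coefficients are the $a_{i,\nu}\in\bZ_p$, and every coefficient attached to a monomial of degree at least $2$ carries a factor of $p$; this is exactly assertion (iii). Moreover $|a_{i,\nu}p^{|\nu|-1}|_p\le p^{-(|\nu|-1)}\to 0$ as $|\nu|\to\infty$, so $F_i\in\bZ_p[[U_1,\dots,U_g]]$ converges on all of $\bZ_p^g$ (indeed on a strictly larger polydisc), which gives (i).

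It remains to verify \eqref{power eq}. By the correspondence established just before the proposition, a point $\beta\in\cX(\bZ_p)$ with $r(\beta)=x$ corresponds to the prime $(T_1-p\beta_1,\dots,T_g-p\beta_g)$ of $\Oxh$, equivalently to the $\bZ_p$-algebra homomorphism $e_\beta\colon\Oxh\to\bZ_p$ determined by $e_\beta(T_i)=p\beta_i$. Since $r(\Phi(\beta))=\bcP(r(\beta))=\bcP(x)=x$, the point $\Phi(\beta)$ again reduces to $x$, and tracing the morphism $\Phi\circ\beta\colon\Spec\bZ_p\to\cX$ on sections shows that $\Phi(\beta)$ corresponds to the composite $e_\beta\circ\widehat\psi$. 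Hence the $i$-th coordinate of $\iota(\Phi(\beta))$ equals $p^{-1}(e_\beta\circ\widehat\psi)(T_i)=p^{-1}e_\beta(G_i)=p^{-1}G_i(p\beta_1,\dots,p\beta_g)=F_i(\beta_1,\dots,\beta_g)$, as claimed.

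The power-series bookkeeping in the middle step is routine. The one place that calls for care is this last identification: that the $\bZ_p$-points of $\cX$ reducing to $x$ are precisely the evaluation homomorphisms on $\Oxh$, and that $\Phi$ acts on them by precomposition with $\widehat\psi$. The work there consists of keeping the direction of every morphism straight and confirming that $\widehat\psi$ faithfully represents the action of $\Phi$ on the residue disc of $x$, including the check that $\widehat\psi$ genuinely extends the stalk map $\psi$ and that $G_i$ lands in the maximal ideal.
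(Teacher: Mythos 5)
Your proposal is correct and follows essentially the same route as the paper: both pass to the induced endomorphism of $\Oxh\cong\bZ_p[[T_1,\dots,T_g]]$, set $F_i(U)=p^{-1}H_i(pU_1,\dots,pU_g)$ where $H_i$ is the image of $T_i$, and read off (i) and (iii) from the coefficients. The only (cosmetic) difference is in verifying (ii): you identify $\Phi(\beta)$ with the composite evaluation map $e_\beta\circ\widehat\psi$, whereas the paper computes $(\Phi^*)^{-1}$ of the corresponding prime ideal directly; these are the same fact stated dually.
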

\begin{proof}
The map $\Phi$ induces a ring homomorphism 
$$
\Phi^*: \Oxh \lra \Oxh$$
that sends the maximal ideal $\fmh$ in $\Oxh$ to
itself.  For each $i$, there is a power series $H_i \in \bZ_p[[T_1,
\dots, T_g]]$ such that $\Phi^*T_i = H_i$. Furthermore, since
$\Phi^*T_i$ must be in the maximal ideal of $\Oxh$, the constant term
in $H_i$ must be in $p\bZ_p$.  Then, for any
$(\alpha_1,\dots,\alpha_g) \in p \bZ_p$, we have
$$(\Phi^*)^{-1}(T_1 - \alpha_1, \dots, T_g - \alpha_g) = (T_1 - H_1(\alpha_1, \dots, \alpha_g),\dots, T_g - H_g(\alpha_1,
\dots, \alpha_g))$$
since 
$$(T_1 - H_1(\alpha_1, \dots, \alpha_g),\dots, T_g - H_g(\alpha_1,
\dots, \alpha_g))$$
is a prime ideal of coheight equal to one, and 
$$
H_i(T_1, \dots, T_g) - H_i(\alpha_1, \dots, \alpha_g)$$
is in the
ideal $(T_1 - \alpha_1, \dots, T_g - \alpha_g)$ for each $i$.  Thus,
if $\beta$ corresponds to the prime ideal
$$ (T_1 - p \beta_1, \dots, T_g - p \beta_g)$$
then $\Phi(\beta)$ corresponds to the prime ideal
$$
(T_1 - H_1(p \beta_1, \dots, p \beta_g), \dots, T_g - H_g(p
\beta_1, \dots, p \beta_g)).$$
Hence, letting 
$$F_i(T_1, \dots, T_g) := \frac{1}{p} H_i(p T_1, \dots, p T_g)$$
gives the desired map. Since $H_i \in \bZ_p[[T_1, \dots, T_g]]$, it
follows that $F_i$ must converge on $\bZ_p$ and that all the
coefficients of terms of degree greater than one of $F_i$ are
divisible by $p$.  Since the constant term
in $H_i$ is divisible by $p$, we conclude that 
$$F_1, \dots, F_g \in  \bZ_p[[T_1, \dots, T_g]],$$
as desired. 
\end{proof}

Switching to vector notation, we write
$$ {\vec \beta} := (\beta_1, \dots, \beta_g)\in\bZ_p^g, $$
and we let
$$
\cF ({\vec \beta}) := (F_1(\beta_1,\dots,\beta_g), \dots,
F_g(\beta_1, \dots, \beta_g)).$$
From Proposition~\ref{power}, we see
that there is a $g \times g$ matrix $L$ with coefficients in
$\bZ_p$ and a constant ${\vec C} \in \bZ_p^g$ such that
\begin{equation}\label{un}
\cF({\vec \beta}) = \vec{C} + L({\vec \beta})  + \text{ higher
  order terms }
\end{equation}
Note that since all of the higher order terms are divisible by $p$,  we
also have
\begin{equation}\label{mod p}
\cF({\vec \beta}) \equiv \vec{C} +  L({\vec \beta}) \pmod{p}. 
\end{equation}

\begin{remark}
\label{divisible by many}
Moreover, using that $F_i(T_1,\dots,T_g)=\frac{1}{p} H_i(pT_1,\dots,pT_g)$, we obtain that for each $k_1,\dots,k_g\in\N$ such that $k_1+\dots+k_g\ge 2$, the coefficient of $T_1^{k_1}\cdots T_g^{k_g}$ in $F_i$ belongs to $p^{k_1+\dots +k_g-1}\cdot \bZ_p$.
\end{remark}

\begin{prop}
\label{prop:pI-0}
Suppose that $\bcP(x) = x$ and that $\bcP$ is
unramified at $x$.   Let $L$ be as in \eqref{un}.  Then $L$ is invertible modulo $p$.
\end{prop}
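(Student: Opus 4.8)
The plan is to identify the reduction $L \bmod p$ with the matrix of the endomorphism that $\bcP$ induces on the Zariski cotangent space $\fm/\fm^2$ of $\bcX$ at $x$, and then to use the standard fact that an unramified morphism induces a surjection on cotangent spaces. Since the cotangent space is $g$-dimensional on both sides, surjectivity is the same as invertibility, which is exactly the assertion.

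In detail, I would proceed as follows. First, I would record the relationship between $L$ and the power series $H_1,\dots,H_g$ with $\Phi^\ast T_i = H_i$ from the proof of Proposition~\ref{power}: writing $H_i = c_i + \sum_{j=1}^g a_{ij}T_j + (\text{terms of degree} \geq 2)$ with $c_i \in p\bZ_p$, the identity $F_i(T_1,\dots,T_g) = \frac{1}{p}H_i(pT_1,\dots,pT_g)$ shows that the degree-one part of $F_i$ is $\sum_j a_{ij}T_j$, so comparison with \eqref{un} gives $L = (a_{ij})_{i,j}$; thus it suffices to show that $(\bar a_{ij})$ is invertible over $\bF_p$, where $\bar a_{ij} := a_{ij} \bmod p$. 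Second, I would reduce modulo $p$: by Proposition~\ref{regular} we have $\widehat \cO_{\bcX,x} = \Oxh / p\,\Oxh = \bF_p[[T_1,\dots,T_g]]$, the map $\bcP^\ast$ on this ring sends $T_i$ to $\bar H_i := H_i \bmod p$, and since $\bcP(x) = x$ each $\bar H_i$ lies in $\fm := (T_1,\dots,T_g)$, with class $\sum_j \bar a_{ij}T_j$ in $\fm/\fm^2$. Third, I would unpack the hypothesis: ``$\bcP$ unramified at $x$'' means $\fm_{\bcP(x)}\cO_{\bcX,x} = \fm_x$ (the residue-field condition is automatic, since $\kappa(x) = \kappa(\bcP(x)) = \bF_p$), and passing to completions — which is faithfully flat — this becomes $(\bar H_1,\dots,\bar H_g) = \fm$ in $\bF_p[[T_1,\dots,T_g]]$. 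Fourth, I would invoke Nakayama's lemma: $(\bar H_1,\dots,\bar H_g) = \fm$ holds if and only if the classes of the $\bar H_i$ span $\fm/\fm^2 \cong \bF_p^g$, i.e. if and only if the vectors $(\bar a_{i1},\dots,\bar a_{ig})$ span $\bF_p^g$, i.e. if and only if $(\bar a_{ij})$ is invertible. Since $(\bar a_{ij}) = L \bmod p$, this completes the argument.

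The main obstacle, such as it is, lies in the third step: one must match the scheme-theoretic definition of ``unramified at $x$'' with the concrete statement $(\bar H_1,\dots,\bar H_g) = (T_1,\dots,T_g)$ in $\bF_p[[T_1,\dots,T_g]]$; the rest is routine manipulation of power series. It is worth remarking that $x$ lies in the smooth locus of $\cX \to \Spec\bZ_p$, so $\bcX$ is smooth over $\bF_p$ at $x$ and $\dim_{\bF_p}\fm/\fm^2 = g$ — this is precisely what makes ``surjective on cotangent spaces'' equivalent to ``bijective'', consistently with $L$ being a $g \times g$ matrix.
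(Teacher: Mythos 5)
Your proposal is correct and follows essentially the same route as the paper: both arguments identify $L \bmod p$ with (the matrix of) the map that $\bcP^*$ induces on the cotangent space $\fmb/\fmb^2$ at $x$, and both use unramifiedness to conclude that this induced map is surjective, hence invertible since the space is $g$-dimensional. The only cosmetic difference is that you derive the surjectivity directly from the definition of unramified via Nakayama's lemma, where the paper cites a reference for the fact that $\bcP^*$ maps $\fmb$ surjectively onto itself.
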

\begin{proof}
  Let $\cO_{\bcX, x}$ denote the local ring of $x$ on $\bcX$ and let
  $\fmb$ denote its maximal ideal.  Since $\bcP$ is unramified, the
  map $\bcP^*: \cO_{\bcX, x} \lra \cO_{\bcX, x}$ sends $\fmb$
  surjectively onto itself (see \cite[Appendix B.2]{BG}).  Thus in
  particular it induces an isomorphism on the $\bF_p$-vector space
  $\fmb/\fmb^2$.  Completing $\cO_{\bcX, x}$ at $\fmb$, we then get an
  induced isomorphism $\sigma$ on ${\widehat \fmb}/{\widehat \fmb}^2$,
  where ${\widehat \fmb}$ is the maximal ideal in the completion of
  $\cO_{\overline{\cX},x}$ at $\fmb$.  This isomorphism is obtained by
  taking the map $\Phi^*: \fmh / \fmh^2 \lra \fmh / \fmh^2$ and
  modding out by $p$, where $\fmh$ is the maximal ideal of $\Oxh$.
  Writing $\sigma$ as a linear transformation with respect to the
  basis $T_1,\dots, T_g$ for ${\widehat \fmb}/{\widehat \fmb}^2$, we obtain the dual of
  the reduction of $L$ mod $p$.  Thus, if $\bcP^*$ induces an isomorphism
  on $\fmb/\fmb^2$, then the reduction mod $p$ of $L$ itself must be
  invertible.
\end{proof}
\begin{remark}
Note that the dual of $\fmb/\fmb^2$ is the Zariski tangent space of
the point $x$ considered as a point on the special fiber ${\overline \cX}$ (see
\cite[page 80]{H}).  Thus the reduction of $L$ modulo $p$ is simply the
Jacobian of $\bcP$ at $x$.      
\end{remark}  

\begin{proposition}
\label{prop:pI}
There exists a positive integer $n$ such that $\cF^n(\vec{\beta}) \equiv \vec{\beta} \pmod{p}$ for each $\vec{\beta}\in\bZ_p^g$.
\end{proposition}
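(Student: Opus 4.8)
The plan is to exploit the mod-$p$ description of $\cF$ from \eqref{mod p} together with the invertibility statement of Proposition~\ref{prop:pI-0}. Reducing everything modulo $p$, the map $\cF$ induces a self-map $\overline{\cF}$ of $\F_p^g$ given by the affine transformation $\vec{v}\mapsto \overline{L}(\vec{v}) + \overline{\vec{C}}$, where $\overline{L}$ is the reduction of $L$ and $\overline{\vec C}$ the reduction of $\vec C$. By Proposition~\ref{prop:pI-0}, $\overline{L}\in\GL_g(\F_p)$, so $\overline{\cF}$ is an invertible affine map of the finite set $\F_p^g$, hence a permutation of $\F_p^g$.

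Since the group of permutations of the finite set $\F_p^g$ is finite (indeed, the affine group $\mathrm{AGL}_g(\F_p)$ is finite), the element $\overline{\cF}$ has finite order; call it $n$. Then $\overline{\cF}^n = \Id$ as a map on $\F_p^g$, i.e. $\overline{\cF^n}(\vec{v}) = \vec{v}$ for every $\vec{v}\in\F_p^g$. Unwinding this via the reduction map $\bZ_p^g\to\F_p^g$: for every $\vec{\beta}\in\bZ_p^g$, the reduction of $\cF^n(\vec{\beta})$ equals the reduction of $\vec\beta$, which is exactly the assertion $\cF^n(\vec{\beta})\equiv\vec{\beta}\pmod p$. (One should note here that reduction mod $p$ is compatible with composition of the power-series maps $\cF$, since all $F_i$ have $\bZ_p$-coefficients and converge on $\bZ_p^g$, so $\overline{\cF\circ\cF} = \overline{\cF}\circ\overline{\cF}$ on $\F_p^g$; this is routine.)

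I do not anticipate a serious obstacle: the only mildly delicate point is making precise that the mod-$p$ reduction of the composite $\cF^n$ is the $n$-fold composite of the affine map $\overline{\cF}$ on $\F_p^g$, but this follows immediately from the fact that each $F_i\in\bZ_p[[T_1,\dots,T_g]]$ converges on $\bZ_p^g$ and that evaluation-then-reduction agrees with reduction-then-evaluation. One could alternatively phrase the argument purely in terms of the finite group $\mathrm{AGL}_g(\F_p)$ and take $n$ to be its exponent, which gives a single $n$ working simultaneously for all such $\cF$; but the order of the single element $\overline{\cF}$ suffices for the statement as written.
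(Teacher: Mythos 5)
Your argument is correct and is essentially the paper's own proof: both reduce $\cF$ modulo $p$ to the affine map $\vec v\mapsto \overline{L}(\vec v)+\overline{\vec C}$ via \eqref{mod p}, invoke Proposition~\ref{prop:pI-0} to see this is a permutation of the finite set $\F_p^g$, and take $n$ to be its order. The paper states this more tersely, but the content is identical.
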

\begin{proof}
Because $L$ is invertible modulo $p$, it means that the reduction modulo $p$ of the affine map
$$\vec{\beta}\mapsto \vec{C} + L(\vec{\beta})$$
induces an automorphism of $\mathbb{F}_p^g$. Therefore, there exists a positive integer $n$ such that 
\begin{equation}
\label{pI}
\cF^n(\vec{\beta})\equiv\vec{\beta}\pmod{p},
\end{equation}
for all $\vec{\beta}\in\bZ_p$.
\end{proof}

\section{Construction of an analytic function}
\label{analytic}

The goal of this Section is to construct a $p$-adic analytic function $U:\Z_p\lra\Z_p^n$ such that $U(z+1)=\cF(U(z))$, where $\cF$ is constructed as in Section~\ref{geometry} for a closed point $x\in\cX$ and an unramified endomorphism $\Phi$ of the $n$-dimensional smooth $\Z_p$-scheme $\cX$. For this, we generalize the construction from \cite{Bell}, and thus provide the key analytical result (see our Theorem~\ref{thm: padic}) which will be used in the proof of Theorem~\ref{main result}.

\begin{defn} 
\label{BC}
Given a prime $p$, we let $B$ denote the ring of Mahler polynomials
and let $C$ denote the ring of Mahler series; i.e.,
$$B \ = \ \left\{ \sum_{i=0}^{m} c_i \binom{z}{i}~:~c_i\in \mathbb{Z}_p, m\ge 0\right\},$$~ $$C \ = \ \left\{ \sum_{i=0}^{\infty} c_i \binom{z}{i}~:~c_i\in \mathbb{Z}_p, |c_i|_p\rightarrow 0\right\}.$$
\end{defn}

\begin{remark}\label{rmk}
The names Mahler polynomials and Mahler series are used because of a
result of Mahler \cite{MahlerK1, MahlerK2}, which states that $C$ is
precisely the collection of continuous maps from $\mathbb{Z}_p$ to
itself.

More precisely, Mahler shows that $B$ is the ring of all polynomials
$f\in\bQ_p[z]$ such that $f(\bZ_p)\subset \bZ_p$, while $C$ is the
ring of all power series $g\in\bQ_p[[z]]$ which are convergent on
$\bZ_p$ and satisfy $g(\bZ_p)\subset\bZ_p$.
\end{remark}

\begin{lem} Let $n\in\N$ and let $p$ be a prime number.  Suppose that
\[ S_n \ = \ \Bigg\{ c+\sum_{i=1}^n p^i h_i(z)~|~c\in\bZ_p, h_i(z)\in B, {\rm deg}(h_i)\le 2i-1\Bigg\}\]
and
\[ T_n \ = \ S_n+\Bigg\{ \sum_{i=1}^{\infty} p^i h_i(z)~|~ h_i(z)\in B, {\rm deg}(h_i)\le 2i-2\Bigg\},\]
with the convention that if $n=0$, then $S_n=\bZ_p$.

Then the subalgebra of $C$ generated by convergent $\mathbb{Z}_p$-power series with variables in $S_n$ is contained in $T_n$.
\label{lem: SN}
\end{lem}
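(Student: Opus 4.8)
The plan is to reduce the lemma to two properties of the set $T_n$, regarded as a subset of $C$: that $T_n$ is a $\mathbb{Z}_p$-subalgebra of $C$ containing $S_n$, and that $T_n$ is closed in $C$ for the topology of uniform convergence on $\mathbb{Z}_p$. Granting these, the lemma is immediate. By a convergent $\mathbb{Z}_p$-power series with variables in $S_n$ we mean an element $F(s_1,\dots,s_m)\in C$, where $F=\sum_{\vec k}c_{\vec k}x_1^{k_1}\cdots x_m^{k_m}$ has coefficients $c_{\vec k}\in\mathbb{Z}_p$ and converges on $\mathbb{Z}_p^m$ (equivalently $c_{\vec k}\to 0$), and $s_1,\dots,s_m\in S_n$. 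Since $|s_i(z)|_p\le 1$ for every $z\in\mathbb{Z}_p$, the truncations of $F$ to total degree $\le d$, evaluated at $(s_1,\dots,s_m)$, converge in $C$ to $F(s_1,\dots,s_m)$; each such truncation lies in $\mathbb{Z}_p[s_1,\dots,s_m]\subseteq T_n$, so $F(s_1,\dots,s_m)\in T_n$ by closedness. As $T_n$ is a subalgebra, the whole subalgebra of $C$ generated by all such elements lies in $T_n$.

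That $T_n$ is a $\mathbb{Z}_p$-submodule of $C$ containing $S_n$ (and hence $1$) is immediate from the definition, $T_n$ being a sum of $\mathbb{Z}_p$-submodules one of which is $S_n$. For closure under multiplication I would first record the reformulation
\[
T_n=\Bigl\{\textstyle\sum_{i\ge 0}p^i f_i(z)\ :\ f_0\in\mathbb{Z}_p,\ f_i\in B,\ \deg f_i\le D_n(i)\ \text{for}\ i\ge 1\Bigr\},
\]
where $D_n(0)=0$, $D_n(i)=2i-1$ for $1\le i\le n$, and $D_n(i)=2i-2$ for $i>n$; this comes from collecting the terms of an element of $T_n$ according to the power of $p$ (the $S_n$-summand contributes degree $\le 2i-1$ only for $i\le n$, while the tail contributes degree $\le 2i-2$). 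If $u=\sum_i p^i f_i$ and $v=\sum_j p^j g_j$ lie in $T_n$, then $uv=\sum_{k\ge 0}p^k\bigl(\sum_{i+j=k}f_i g_j\bigr)$; since $B$ is a ring, each coefficient $\sum_{i+j=k}f_i g_j$ lies in $B$, of degree at most $\max_{i+j=k}\bigl(D_n(i)+D_n(j)\bigr)$, while the $k=0$ coefficient is $f_0 g_0\in\mathbb{Z}_p$. Everything then comes down to the \emph{superadditivity}
\[
D_n(i)+D_n(j)\le D_n(i+j)\qquad(i,j\ge 0),
\]
which is a short case analysis according to whether one of $i,j$ vanishes, whether both are $\le n$ with $i+j\le n$ or with $i+j>n$, whether exactly one exceeds $n$, or both do; in every regime the difference between the bound $2i-1$ for small indices and $2i-2$ for large ones is precisely what keeps the inequality valid. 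Hence $uv\in T_n$, so $T_n$ is a $\mathbb{Z}_p$-subalgebra of $C$.

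To prove $T_n$ closed, I would show it is in fact compact. Put $P_d:=\{f\in B:\deg f\le d\}=\bigoplus_{l=0}^{d}\mathbb{Z}_p\binom{z}{l}$ (recall Definition~\ref{BC}); this is a compact $\mathbb{Z}_p$-submodule of $C$, being the continuous image of $\mathbb{Z}_p^{d+1}$. The $\mathbb{Z}_p$-linear map
\[
\Xi\colon\ \mathbb{Z}_p\times\prod_{i=1}^{n}P_{2i-1}\times\prod_{i=1}^{\infty}P_{2i-2}\ \lra\ C,\qquad\bigl(c,(g_i),(h_i)\bigr)\longmapsto c+\sum_{i=1}^{n}p^i g_i+\sum_{i=1}^{\infty}p^i h_i,
\]
is well defined because $\|p^i h_i\|_{\sup}\le p^{-i}\to 0$, and the same bound shows it continuous for the product topology on its (compact) domain; by construction its image is precisely $T_n$. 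A continuous image of a compact space is compact, hence closed, so $T_n$ is closed in $C$. Combined with the previous paragraph, this shows $T_n$ is a closed $\mathbb{Z}_p$-subalgebra of $C$ containing $S_n$, which completes the proof.

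The step I expect to carry the real content is the reformulation of $T_n$ in terms of the single degree function $D_n$ together with the verification of its superadditivity: this is exactly where the bounds $\deg h_i\le 2i-1$ in $S_n$ and $\deg h_i\le 2i-2$ in the tail are used, and it is what forces those particular definitions. The remaining ingredients --- that $B$ is a ring, that power-series substitutions are uniform limits of polynomial ones, and the compactness argument for closedness --- are routine once this combinatorial core is settled.
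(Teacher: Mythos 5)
Your proof is correct, and its combinatorial core --- collecting an element of $T_n$ by powers of $p$ and checking the superadditivity $D_n(i)+D_n(j)\le D_n(i+j)$ of the degree bounds --- is exactly the degree count the paper performs (there phrased as $\deg(g_ih_{k-i})\le (2i-1)+(2(k-i)-1)=2k-2$). The structural difference is that the paper only proves the weaker containment $S_nT_n\subseteq T_n$, which suffices because every polynomial in elements of $S_n$ is reached by iterated multiplication by $S_n$, whereas you prove the stronger statement that $T_n$ is itself a $\bZ_p$-subalgebra; moreover, where the paper simply asserts that $T_n$ is closed under limits of polynomials lying in $T_n$, you justify this by exhibiting $T_n$ as the continuous image of a compact product $\bZ_p\times\prod_i P_{2i-1}\times\prod_i P_{2i-2}$, hence compact and closed in $C$. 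This buys a cleaner logical structure (a closed subalgebra containing $S_n$ must contain everything generated by substitution) at the cost of the extra case analysis in the superadditivity check; both routes are sound.
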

\begin{proof} 
For $n=0$, the conclusion is immediate. So, assume $n\ge 1$. 

Since $S_n$ and $T_n$ are both closed under addition, and $T_n$ is closed under taking limits of polynomials which are contained in $T_n$, and $S_n\subseteq T_n$, it is sufficient to show that $S_nT_n\subseteq T_n$.
To do this, suppose
\[H(z) \ = \ c_1+ \sum_{i=1}^n p^i h_i(z) \ \in \ S_n\] and
\[G(z) \ = \ c_2 + \sum_{i=1}^{\infty} p^i g_i(z) \ \in \ T_n,\]
where $c_1,c_2\in \bZ_p$ and $h_i(z),g_i(z)\in B$ with ${\rm deg}(g_i)\le 2i-2$ for $i>n$ and ${\rm deg}(g_i),{\rm deg}(h_i)\le 2i-1$ for $i\le n$.
We must show that $H(z)G(z)\in T_n$.  Notice that
\[
H(z)G(z) \ = \ -c_1c_2 + c_2H(z) +c_1G(z)  + \left(\sum_{i=1}^n p^i h_i(z)\right)\cdot\left(\sum_{j=1}^{\infty} p^j g_j(z)\right). \]
Then $(-c_1c_2 + c_2H(z)+c_1G(z))\in T_n$ and since $T_n$ is closed under addition, it is sufficient to show that
\[\left(\sum_{i=1}^n p^i h_i(z)\right)\cdot\left(\sum_{j=1}^{\infty} p^j g_j(z)\right)  \ = \ \sum_{k=2}^{\infty} p^k \sum_{i=1}^{k-1}g_i(z)h_{k-i}(z)\]
is in $T_n$.  But since $g_i(z)$ has degree at most $2i-1$ and $h_{k-i}(z)$ has degree at most $2(k-i)-1$, we see that 
$$\sum_{i=1}^{k-1} g_i(z)h_{k-i}(z)$$ has degree at most $2k-2$.  It follows that
\[\left(\sum_{i=1}^n p^i h_i(z)\right)\cdot\left(\sum_{j=1}^{\infty} p^j g_j(z)\right)  \ \in \ T_n.\]
This concludes the proof of Lemma~\ref{lem: SN}. 
\end{proof}

\begin{thm} \label{thm: padic} Let $n$ be a positive integer, let $p$ be a prime number and let $\varphi_1,\dots,\varphi_n\in \bZ_p[[x_1,\dots,x_n]]$ be convergent power series on $\bZ_p^n$ such that for each $i=1,\dots,n$ we have
\begin{enumerate}
\item[(a)] $\varphi_i(x_i) \equiv x_i \pmod{p}$; and 
\item[(b)] for each $k_1,\dots,k_n\in\N$ such that $k_1+\dots+k_n\ge 2$, the coefficient of $x_1^{k_1}\cdots x_n^{k_n}$ in the power series $\varphi_i$ belongs to $p^{k_1+\dots+k_n-1}\cdot \bZ_p$.
\end{enumerate}
Let $(\omega_1,\dots,\omega_n)\in\bZ_p^n$ be fixed.
If $p>3$, then there exist $p$-adic analytic functions
$f_1,\ldots,f_n\in \mathbb{Q}_p[[z]]$ such that for each $i=1,\dots,n$ we have
\begin{enumerate}
\item $f_i$ is convergent for $|z|_p \leq 1$;
\item $f_i(0)=\omega_i$;
\item $|f_i(z)|_p \le 1$ for $|z|_p \leq 1$; and
\item $f_i(z+1) = \varphi_i(f_1(z),\ldots,f_n(z))$.
\end{enumerate}
\end{thm}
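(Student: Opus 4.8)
The plan is to build the $f_i$ coordinate-wise as Mahler series, i.e. as elements of the ring $C$ of Definition~\ref{BC}, obtained as the limit of a sequence of polynomial approximations. I would start from the ansatz $f_i(z) = \omega_i + \sum_{j\ge 1} p^j a_{i,j}(z)$ with $a_{i,j} \in B$, and more precisely I would look for $f_i \in \omega_i + T_n^{(0)}$ where $T_n^{(0)}$ denotes the ideal of $T_n$ consisting of series with zero constant term; Lemma~\ref{lem: SN} is exactly the tool that guarantees such a form is stable under substitution into the convergent $\bZ_p$-power series $\varphi_i$. The strategy is a successive-approximation (Newton-type / Hensel-type) scheme: construct polynomials $f_i^{(m)}$ agreeing with $f_i$ modulo $p^{m}$ (in a suitable sense measuring both $p$-divisibility and degree growth, matching the shape of $S_n$ and $T_n$), such that the functional equation $f_i^{(m)}(z+1) \equiv \varphi_i(f_1^{(m)}(z),\dots,f_n^{(m)}(z)) \pmod{p^{m+1}}$ holds, and show the corrections at each stage lie in $p^{m}B$ with controlled degree so that the limit lies in $T_n$ and hence in $C$; the resulting $f_i$ then automatically satisfy (1)--(3) since $C$ consists of convergent maps $\bZ_p\to\bZ_p$ (Remark~\ref{rmk}), and (2) from the constant term, while (4) holds by passing to the limit.

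The heart of the argument is the inductive step. Given $f_i^{(m)}$, set $E_i^{(m)}(z) := \varphi_i(f_1^{(m)}(z),\dots,f_n^{(m)}(z)) - f_i^{(m)}(z+1)$; by construction this is divisible by $p^{m+1}$, say $E_i^{(m)} = p^{m+1} e_i^{(m)}$ with $e_i^{(m)}\in C$, and I need to find a correction term $c_i^{(m)}(z)\in p^{m+1}B$ (of degree $\le 2(m+1)-2$ or so, to stay inside $T_n$) such that $f_i^{(m+1)} := f_i^{(m)} + c_i^{(m)}$ reduces $E_i$ one more power of $p$. Writing $c_i = p^{m+1} d_i$, the new error is, modulo $p^{m+2}$, governed by the linear operator $d_i(z) \mapsto d_i(z+1) - \sum_j \frac{\partial \varphi_i}{\partial x_j}\big|_{(\vec\omega)} d_j(z) \equiv d_i(z+1) - d_i(z) \pmod p$, using hypothesis (a) which forces the Jacobian of $(\varphi_i)$ to be the identity mod $p$. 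Thus the linearized equation to solve mod $p$ is the difference equation $d_i(z+1) - d_i(z) = -\bar e_i^{(m)}(z)$ in $\F_p[z]$; since the forward-difference operator $\Delta$ on $\F_p[z]$ raises the $p$-adic valuation of the degree content appropriately and is surjective onto polynomials (one can integrate term by term against the basis $\binom{z}{k}$, where $\Delta\binom{z}{k+1}=\binom{z}{k}$), this is solvable, and the degree bookkeeping $\deg\binom{z}{k+1} = \deg\binom{z}{k}+1$ is precisely what matches the degree constraints defining $S_n$ and $T_n$.

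I expect the main obstacle to be the degree/valuation bookkeeping needed to land the limit inside $T_n$ rather than merely in $C$: one must check that at stage $m$ the error $e_i^{(m)}$ and the correction $d_i^{(m)}$ have degree bounded by something like $2m$, so that solving $\Delta d_i = -\bar e_i$ costs only one extra degree and the inductive invariant is preserved — this is where the condition $p>3$ should enter, since dividing by the "integrating" denominators coming from $\binom{z}{k}$ (and controlling how $\varphi_i$'s higher-order terms, which by hypothesis (b) carry extra powers of $p$ exactly compensating degree growth, interact with these denominators) requires the residue characteristic to be large enough that no unwanted $p$'s are lost. Concretely, hypothesis (b) and Remark~\ref{divisible by many} are tailored so that substituting a degree-$D$, $p^j$-divisible perturbation into $\varphi_i$ produces terms whose $p$-adic size beats their degree growth, which is exactly the closure property proved in Lemma~\ref{lem: SN}; the remaining work is to verify that the correction polynomials can always be chosen of degree $\le 2(m+1)-1$ (matching $S_n$) with zero constant term, using that $\Delta$ is a bijection from the span of $\binom{z}{1},\dots,\binom{z}{k+1}$ onto the span of $\binom{z}{0},\dots,\binom{z}{k}$, and then to assemble the telescoping limit and read off properties (1)--(4).
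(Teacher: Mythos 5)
Your proposal takes essentially the same route as the paper's own proof: a successive-approximation scheme in the Mahler basis $\binom{z}{k}$, with Lemma~\ref{lem: SN} providing the degree-versus-$p$-divisibility control under substitution into the $\varphi_i$, hypothesis (a) reducing the linearized step to the difference equation solved via $\binom{z+1}{k}-\binom{z}{k}=\binom{z}{k-1}$, and $p>3$ invoked at the end to upgrade the resulting Mahler series (whose $k$-th coefficient has valuation at least $(k+1)/2$) to a genuinely analytic function. The outline is sound and matches the paper's argument in all essential respects.
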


A particular case of our result, when $n=1$ and $\varphi_1$ is a rational $p$-adic function, is discussed in \cite{Riv1}.

\begin{proof}[Proof of Theorem~\ref{thm: padic}.] 
We construct $(f_1(z), ..., f_{n}(z))$ by ``approximation''.  We let $B$ and 
$C$ be the ring of Mahler polynomials and the ring of Mahler series that are convergent for $|z|_p\le 1$ (as in Definition~\ref{BC}).
We prove that for each $j \ge 0$ there
exist polynomials $h_{i,j}(z) \in B$  ($1\le i\le n$) that satisfy
the three conditions:

(1) for each $i=1,\dots,n$, we have $h_{i,0}(0)=\omega_i$, while $h_{i,j}(0)=0$ for $j\ge 1$;

(2) $h_{i,j}(z)$ has degree at most $2j-1$ for $j\ge 1$ and $1\le i\le n$; and

(3) if $g_{i,j}(z)=\sum_{k=0}^{j} p^{k}h_{i,k}(z)$, then for each $i=1,\dots,n$ we have
\[g_{i,j}(z+1)\equiv \varphi_i(g_{1,j}(z),\ldots ,g_{n,j}(z)) ~(\bmod ~
p^{j+1}C ).\]

We define $h_{i,0}(z)=g_{i,0}(z)=\omega_i$ for $1\le i\le n$. Because $\varphi_i(x_1,\dots,x_n)\equiv x_i\pmod{p}$, and because each $\varphi_i$ is convergent on $\bZ_p^n$, we conclude that  for every $i=1,\dots,n$ we have
$$g_{i,0}(z+1)\equiv\varphi_i(g_{1,0}(z),\dots,g_{n,0}(z))\equiv \omega_i\pmod{pC}.$$

Let $j\ge 1$ and assume that we have defined $h_{i,k}$ for $1\le i\le n$ 
and $k <j $
so that conditions (1)-(3) hold. Our goal is now to construct polynomials
$h_{i, j}(z)\in B$,
so that conditions (1)-(3) hold.
By assumption
\begin{equation} \label{eq: gH}
g_{i,j-1}(z+1)-\varphi_i(g_{1,j-1}(z),\ldots ,g_{n,j-1}(z))=p^jQ_{i,j}(z),
\end{equation}
with
$Q_{i,j}\in C$
for $1\le i\le n$.
Using the notation of the statement of Lemma \ref{lem: SN}, we see that conditions (2) and (3) show that
$g_{1,j-1}(z),\ldots ,g_{n,j-1}(z)$ are in $S_{j-1}$.
Thus by Lemma \ref{lem: SN} we see that
\[ p^j Q_{i,j}(z)\ = \ g_{i,j-1}(z+1)-\varphi_i(g_{1,j-1}(z),\ldots
,g_{n,j-1}(z))\] is in $T_{j-1}$ (note that our hypothesis (b) implies
that each 
$$\varphi_i(g_{1,j-1}(z),\ldots ,g_{n,j-1}(z))$$
is a convergent power series).  It follows that we can write $p^j Q_{i,j}(z)=
c_{i,j}+ \sum_{k=1}^{\infty} p^k q_{ijk}(z)$ for some $c_{i,j}\in
\bZ_p$ and polynomials $q_{ijk}(z)\in B$ such that ${\rm
  deg}(q_{ijk})\le 2k-1$ for $k\le j-1$ and ${\rm deg}(q_{ijk})\le
2k-2$ for $k\ge j$.  Consequently, $p^jQ_{i,j}(z)$ is congruent modulo
$p^{j+1}C$ to the polynomial
\begin{equation}\label{one}
 c_{i,j}+ \sum_{k=1}^{j} p^k q_{ijk}(z),
\end{equation}
which is a polynomial of degree at most $2j-2$.  Note that the
polynomial in \eqref{one} must send $\bZ_p$ to $p^j \bZ_p$ since $p^j
Q_{i,j}(z)$ does, so dividing out by $p^j$, we see by Remark \ref{rmk}
that $Q_{i,j}(z)$ is congruent to a polynomial in $B$ of degree at
most $2j-2$ modulo $pC$.  To satisfy property (3) for $j$ it is
sufficient to find $\{h_{i, j}(z) \in B : 1 \le i \le n\}$ such that
\[
g_{i,j-1}(z+1)+p^j h_{i,j}(z+1)  - \varphi_i(g_{1,j-1}(z)+p^jh_{1,j}(z),\ldots ,
g_{n,j-1}(z)+p^jh_{n,j}(z))
\]
is in $p^{j+1}C$
for $1\le i\le n$.
Modulo $p^{j+1}C $, this expression becomes
\begin{equation*}
\begin{split}
p^jQ_{i,j}(z) + & p^j h_{i,j}(z+1) \\
& - p^j \sum_{\ell=1}^n h_{\ell,j}(z) \frac{\partial \varphi_{i}}
{\partial x_{\ell}} (x_1, ..., x_n)
\Big|_{x_{1}=g_{1,j-1}(z),\ldots, x_n=
g_{n,j-1}(z)}.
\end{split}
\end{equation*}
It therefore suffices to solve the system
\begin{equation} \label{eq: Q}
Q_{i,j}(z) + h_{i,j}(z+1)
 - \sum_{\ell=1}^n h_{\ell,j}(z) \frac{\partial \varphi_{i}}
{\partial x_{\ell}}(x_1, ..., x_n) \Big|_{x_{1}=g_{1,j-1}(z),\ldots, x_n=
g_{n,j-1}(z)}=0
\end{equation}
modulo $pC$ for $0 \leq i \leq n$.  
By our hypotheses (a) on $\varphi$, we have
 $$ \frac{\partial \varphi_{i}}
 {\partial x_{\ell}}(x_1, ..., x_n) \Big|_{x_{1}=g_{1,j-1}(z),\ldots,
   x_n= g_{n,j-1}(z)} \equiv \delta_{i\ell} ~(\bmod pC),$$ where
 $\delta_{i\ell}$ is usual Kronecker delta defined by
 $\delta_{i\ell}=0$ for $i\ne\ell$ and $\delta_{ii}=1$. Hence it is
 sufficient to solve
\begin{equation} \label{eq: Q'}
Q_{i,j}(z) + h_{i,j}(z+1)
 -  h_{i,j}(z)\equiv 0\pmod{pC}.
\end{equation}
Using the identity
\[ \binom{z+1}{k} - \binom{z}{k} \ = \binom{z}{k-1}, \]
we see that since $Q_{1,j},\ldots ,Q_{n,j}$ have degree at most $2j-2$ modulo $pC$,
there exists a  solution $[h_{1,j}(z), ..., h_{n, j}(z)] \in B^n$ satisfying conditions (1) and (2).
Thus conditions (1)-(3) are satisfied for $j$.  This completes the induction
step.

We now set
\[
f_i(z) :=  \sum_{j=0}^{\infty} p^j h_{i, j}(z).
\]
For $j\ge 1$, each $h_{i,j}(z)$ is of degree at most $2j-1$, and so
\[
h_{i,j}(z) = \sum_{k=0}^{2j-1}  c_{ijk} \binom{z}{k},
\]
with $c_{ijk} \in \bZ_p$, and $c_{ij0}=0$ for $j\ge 1$. Thus $f_i(0)=\omega_i$ for each $i=1,\dots,n$ because $h_{i,0}(z)=\omega_i$.
We now find that
\begin{eqnarray}
f_i(z) &=& \omega_i + \sum_{j=1}^{\infty} p^j 
\left( \sum_{k=1}^{2j-1} c_{ijk} \binom{z}{k}\right) \nonumber\\
&=& \omega_i+ \sum_{k=1}^{\infty} b_{ik}  \binom{z}{k} \label{eq475}
\end{eqnarray}
in which
\[
b_{ik} := \sum_{j=1}^{\infty} p^j c_{ijk}
\]
is absolutely convergent $p$-adically, since each $c_{ijk} \in \bZ_p$.
To show the series (\ref{eq475}) is analytic on $\ZZ_p$, we must establish that $|b_{ik}|_p/|k!|_p \to 0$
as $k \to \infty$, i.e. that for any $j >0$ one has $b_{ik}/k! \in p^j \bZ_p$
for all sufficiently large $k$ (see \cite[Theorem 4.7, pp. 354.]{Robert}).
To do this, we note that $c_{ijk}=0$ if $j<(k+1)/2$ since $\deg
h_{i,j} \leq 2j -1$.  Hence
\[ b_{ik} := \sum_{j\ge (k+1)/2} p^j c_{ijk}.\]
It follows that $|b_{ik}|_p\le p^{-(k+1)/2}$. Since $1/|k!|_p<p^{k/(p-1)}$, we see 
that 
$b_{ik}/k!\rightarrow 0$ since $p>3$.  Hence $f_1,\ldots ,f_n$ are $p$-adic analytic maps on $\bZ_p$.
 
By construction
\[ 
f_i(z) \equiv g_{i,j}(z)~(\bmod~p^{j+1}C).
\]
It then follows from property (3) above that
\[
f_i(z+1) \equiv \varphi_i( f_1(z), ..., f_n(z)) (\bmod~ p^{j+1} C)
\]
Since this holds for all $j \ge 1$, we conclude
that
\[
f_i(z+1) = \varphi_i( f_1(z), ..., f_n(z)),
\]
as desired.
\end{proof}

\section{Proof of the main result}
\label{main proof}

First we prove Theorem~\ref{real main result} if $X$ is an irreducible smooth variety.
\begin{theorem}
\label{main result}
Let $\Phi:X \lra X$ be an unramified endomorphism of an irreducible
smooth quasiprojective variety defined over $\bC$. Then for any
subvariety $V$ of $X$, and for any point $\alpha\in X(\bC)$ the
intersection $V(\C)\cap \cO_{\Phi}(\alpha)$ is a union of at most
finitely many orbits of the form $\cO_{\Phi^N}(\Phi^{\ell}(\alpha))$
for some nonnegative integers $N$ and $\ell$.
\end{theorem}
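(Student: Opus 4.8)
The plan is to pass to a $p$-adic model of the situation, interpolate the orbit of $\alpha$ by the analytic functions produced in Theorem~\ref{thm: padic}, and then read off the structure of $\{k:\Phi^k(\alpha)\in V\}$ from the fact that a nonzero $p$-adic analytic function on $\ZZ_p$ has only finitely many zeros. Since a subvariety is a finite union of its irreducible components and the conclusion is additive in $V$, I would first reduce to $V$ irreducible, and (the case $V=X$ being trivial) to $V\subsetneq X$. All of $(X,\Phi,V,\alpha)$ is defined over a finitely generated $\ZZ$-subalgebra $R$ of $\bC$; inverting finitely many elements of $R$ — harmless, because smoothness of $\cX$ and unramifiedness of $\Phi$ are open conditions that hold on the generic fibre — yields a smooth quasiprojective $R$-scheme $\cX$, an unramified $R$-endomorphism $\Phi$, a closed subscheme $\cV$, and a section $\alpha\in\cX(R)$, all base-changing to the original objects over $\bC$. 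By the spreading-out argument already used in \cite{Bell} and \cite{newlog}, there is a prime $p>3$ and an \emph{injective} ring homomorphism $R\hookrightarrow\ZZ_p$ such that the base change $\cX_{\ZZ_p}$ fits the setup of Section~\ref{geometry}; I fix these and keep the notation $(\cX,\Phi,\cV,\alpha)$ for the $\ZZ_p$-objects. Injectivity guarantees that for each $k$ one has $\Phi^k(\alpha)\in V(\bC)$ iff $\Phi^k(\alpha)\in\cV$ as $\ZZ_p$-points, so it suffices to show that $S:=\{k\in\N:\Phi^k(\alpha)\in\cV\}$ is a finite union of infinite arithmetic progressions together with a finite set; this description of $S$ translates at once into the desired description of $V(\bC)\cap\cO_\Phi(\alpha)=\{\Phi^k(\alpha):k\in S\}$ as a finite union of orbits $\cO_{\Phi^N}(\Phi^\ell(\alpha))$.

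Next I would normalise the local picture. The reductions $z_k:=r(\Phi^k(\alpha))$ form an eventually periodic sequence in the finite set $\bcX(\bF_p)$, so there are $\ell_0\ge 0$, $N_0\ge 1$ with $z_{k+N_0}=z_k$ for $k\ge\ell_0$; partitioning $S$ according to the residue of $k$ modulo $N_0$ (and peeling off the finite part $S\cap[0,\ell_0)$) reduces us to the case $r(\Phi^k(\alpha))=x$ for a single $x$ and all $k\ge 0$, with $\bcP(x)=x$, $\Phi$ still unramified, and $x$ in the smooth locus carrying the $\ZZ_p$-lift $\alpha$. Now Section~\ref{geometry} applies: Proposition~\ref{regular} gives $\Oxh=\ZZ_p[[T_1,\dots,T_g]]$, and Propositions~\ref{power}, \ref{prop:pI-0}, \ref{prop:pI} furnish a map $\cF:\ZZ_p^g\to\ZZ_p^g$ with $\iota(\Phi(\gamma))=\cF(\iota(\gamma))$ for all $\gamma$ reducing to $x$, whose components satisfy the divisibility of Remark~\ref{divisible by many}, and with $\cF^n\equiv\mathrm{id}\pmod p$ for some $n\ge 1$. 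Replacing $\Phi$ by $\Phi^n$ and partitioning $S$ once more modulo $n$ (legitimate since $\bcP(x)=x$ forces every iterate of $\alpha$ to reduce to $x$, whence $\cF$ for $\Phi^n$ is $\cF^{\circ n}$), I may assume $\cF(\vec\gamma)\equiv\vec\gamma\pmod p$, so that hypotheses (a) and (b) of Theorem~\ref{thm: padic} hold for the components $\varphi_1,\dots,\varphi_g$ of $\cF$.

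Finally I would invoke Theorem~\ref{thm: padic} with $(\omega_1,\dots,\omega_g):=\iota(\alpha)$ to obtain $p$-adic analytic $f_1,\dots,f_g:\ZZ_p\to\ZZ_p$ with $f_i(0)=\omega_i$ and $f_i(z+1)=\varphi_i(f_1(z),\dots,f_g(z))$; then $U(z):=(f_1(z),\dots,f_g(z))$ satisfies $U(0)=\iota(\alpha)$ and $U(z+1)=\cF(U(z))$, whence $\iota^{-1}(U(k))=\Phi^k(\alpha)$ for all $k\in\N$ by induction (each $\Phi^k(\alpha)$ reduces to $x$, so Proposition~\ref{power} applies at every step). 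If $x$ does not lie on the special fibre of $\cV$ then $S=\emptyset$; otherwise let $G_1,\dots,G_m$ generate the completed ideal of $\cV$ at $x$ inside $\ZZ_p[[T_1,\dots,T_g]]$. Since a point $\gamma$ reducing to $x$ corresponds to the prime $(T_1-p\gamma_1,\dots,T_g-p\gamma_g)$, one has $\gamma\in\cV$ iff $G_s(p\gamma_1,\dots,p\gamma_g)=0$ for all $s$; hence $k\in S$ iff $\Theta_s(k)=0$ for all $s$, where
\[ \Theta_s(z):=G_s\bigl(p f_1(z),\dots,p f_g(z)\bigr) \]
is $p$-adic analytic on $\ZZ_p$ (the arguments $pf_i(z)$ lie in $p\ZZ_p$, where $G_s\in\ZZ_p[[T_1,\dots,T_g]]$ converges). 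If some $\Theta_s$ is not identically zero it has only finitely many zeros in $\ZZ_p$ (Strassmann's theorem, cf.\ \cite{Robert}), so $S$ is finite; if every $\Theta_s$ vanishes identically then $S=\N$. In either case $S$ has the required form, and unwinding the finitely many residue-class reductions made above finishes the proof.

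The genuinely hard analytic input — the existence of the interpolating functions $f_i$ — is already isolated in Theorem~\ref{thm: padic}, so within the present argument I expect the main obstacle to be the opening step: producing a $p$-adic model faithful to the complex picture (smoothness of $\cX$, unramifiedness of $\Phi$, geometric irreducibility of the fibres, and $\alpha$ reducing into the smooth locus) together with an \emph{injective} specialization $R\hookrightarrow\ZZ_p$ for some $p>3$, so that membership ``$\Phi^k(\alpha)\in V$'' transfers in both directions. Everything else is bookkeeping with residue classes and an appeal to Strassmann's theorem.
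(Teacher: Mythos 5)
Your proposal is correct and follows essentially the same route as the paper: spread out over a finitely generated $\Z$-algebra, embed into $\Z_p$ via the Lech--Bell specialization lemma, reduce to a fixed residue class where Propositions~\ref{power}--\ref{prop:pI} apply, interpolate the orbit with Theorem~\ref{thm: padic}, and finish by the finiteness of zeros of a nonzero $p$-adic analytic function. The only (harmless) cosmetic difference is that you test membership in $\cV$ via generators of its completed local ideal at $x$, whereas the paper composes global defining polynomials of $\cV$ with $\iota_j^{-1}\circ U_{j,\ell}$.
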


\begin{remark}
  Because $\Phi$ is unramified, while $X$ is smooth, we actually have
  that $\Phi$ is \'{e}tale (according to \cite[Theorem 5, page
  145]{Shaf}, we obtain an induced isomorphism on the tangent space at
  each point, which means that $\Phi$ is \'{e}tale).
\end{remark}

\begin{proof}[Proof of Theorem~\ref{main result}.]
  Let $V$ be a fixed subvariety of $X$.  We choose an embedding over
  $\bC$ of $\rho:X \lra \bP^M$ as an open subset of a projective
  variety (for some positive integer $M$).  We write $\rho(X) =
  Z(\fa)\setminus Z(\fb)$ for homogeneous ideals $\fa$ and $\fb$ in
  $\bC[x_0, \dots, x_M]$, where $Z(\fc)$ denotes the Zariski closed
  subset of $\bP^M$ on which the ideal $\fc$ vanishes.  We choose
  generators $F_1, \dots, F_m$ and $G_1, \dots, G_n$ for $\fa$ and
  $\fb$, respectively.  Let $R$ be a finitely generated $\bZ$-algebra
  containing the coefficients of the $F_i$, $G_i$, and of the
  polynomials defining the variety $V$, and such that
  $\alpha\in\bP^M(R)$. Let $\cX\subset\bP^M_{\Spec(R)}$ be the model
  for $X$ over $\Spec R$ defined by $Z(\fa')\setminus Z(\fb')$ where
  $\fa'$ and $\fb'$ are the homogeneous ideals in $R[x_0, \dots, x_N]$
  defined by $F_1, \dots, F_m$ and $G_1, \dots, G_n$, respectively;
  similarly, let $\cV$ be the model of $V$ over $\Spec(R)$.  We now
  prove two propositions which allow us to pass from $\bC$ to $\bZ_p$.

\begin{prop}
\label{extension to U}
  There exists a dense open subset $U$ of $\Spec(R)$ such that the
  following properties hold
\begin{enumerate}
\item there is a scheme $\cX_U$ that is smooth and quasiprojective over $U$, and whose generic fiber equals $X$;
\item each fiber of $\cX_U$ is geometrically irreducible;
\item $\Phi$ extends to an unramified endomorphism $\Phi_U$ of $\cX_U$; and
\item $\alpha$ extends to a section $U\lra \cX_U$.
\end{enumerate}
\end{prop}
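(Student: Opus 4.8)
The plan is to prove Proposition~\ref{extension to U} by a standard ``spreading out'' argument, using the fact that all of the geometric hypotheses in play (smoothness, irreducibility of fibers, unramifiedness of a morphism, existence of a section) are constructible and so hold over a dense open subset of the base $\Spec(R)$ once they hold at the generic point. Concretely, I would invoke the general principles of \cite[\S8--9]{EGA4} (or the summary in \cite[Appendix]{BG}): if a property of schemes or morphisms of finite presentation holds over the generic point of an integral base, it holds over a nonempty (hence dense) open subset. I would then intersect the finitely many resulting open subsets to get a single $U$ that works for all four conditions simultaneously.

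More precisely, I would proceed as follows. First, the scheme $\cX$ over $\Spec(R)$ defined by $Z(\fa')\setminus Z(\fb')$ has generic fiber $X$ by construction; since $X$ is smooth over $\bC$ (equivalently over $\Frac(R)$ after a further localization, as smoothness descends along the field extension $\Frac(R)\hookrightarrow\bC$), and smoothness is an open condition on the base for morphisms of finite presentation, there is a dense open $U_1\subseteq\Spec(R)$ over which $\cX$ restricts to a smooth quasiprojective scheme $\cX_{U_1}$; this gives (i). Second, geometric irreducibility of the generic fiber spreads out: by \cite[Th\'eor\`eme 9.7.7]{EGA4} the locus of points of the base over which the fiber is geometrically irreducible is constructible and contains the generic point, hence contains a dense open $U_2$; this gives (ii). Third, $\Phi:X\lra X$ is given by polynomials whose coefficients, after enlarging $R$, lie in $R$, so it extends to a rational map $\cX\dashrightarrow\cX$ defined on a dense open of $\cX$; shrinking the base we may assume it is defined everywhere, and since being unramified (equivalently, $\Omega^1_{\cX/\cX}=0$ relative to $\Phi$) is again an open condition on the base and holds at the generic fiber, there is a dense open $U_3$ over which $\Phi$ extends to an unramified endomorphism $\Phi_{U_3}$; this gives (iii). (One must also ensure $\Phi_U$ maps $\cX_U$ into itself and not merely to the ambient $\bP^M$; this too is a constructible condition satisfied generically.) Fourth, $\alpha\in\bP^M(R)$ lands in $X=\cX\times_R\Frac(R)$ by hypothesis, so it defines a section of $\cX$ over some dense open $U_4$; this gives (iv). Finally set $U=U_1\cap U_2\cap U_3\cap U_4$, which is dense open in $\Spec(R)$ since $\Spec(R)$ is irreducible, and rename the restrictions accordingly.

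The only mild subtlety — and the step I would treat most carefully — is item (iii): extending $\Phi$ itself. A priori $\Phi$ is an endomorphism of the variety $X$ over $\bC$, not obviously defined over $R$; one fixes this by enlarging $R$ at the outset to contain the coefficients of (local) defining equations for $\Phi$, which is harmless since $R$ is only required to be a finitely generated $\bZ$-algebra. Then one must check that the resulting map of $R$-schemes is still a \emph{morphism} (not just rational) and still \emph{unramified} over a dense open of the base; both follow from the constructibility/openness results cited above, applied to the morphism and to the sheaf $\Omega^1$. Everything else is routine application of the spreading-out dictionary, and the union over the four (finitely many) conditions causes no trouble because a finite intersection of dense opens in the irreducible scheme $\Spec(R)$ is again dense open.
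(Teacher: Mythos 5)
Your proposal is correct and follows essentially the same spreading-out argument as the paper: each of the four properties holds at the generic fiber and is an open/constructible condition, so each holds over a dense open of $\Spec(R)$, and one intersects finitely many such opens (the paper cites Altman--Kleiman for smoothness and unramifiedness and van den Dries--Schmidt for geometric irreducibility rather than EGA, but the content is identical). Your explicit remarks on enlarging $R$ to contain the coefficients defining $\Phi$ and on checking that $\Phi_U$ maps $\cX_U$ into itself are, if anything, slightly more careful than the paper's own treatment.
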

\begin{proof}[Proof of Proposition~\ref{extension to U}.]
  Let $\mathcal{B}$ be the closed subset of $\cX$ which is the zero
  set of the polynomials defining $\Phi$. Because $\Phi$ is a
  well-defined morphism on the generic fiber $X$, we conclude that
  $\mathcal{B}$ does not intersect the generic fiber of $\cX\lra
  \Spec(R)$.  Therefore $\mathcal{B}$ is contained in the pullback
  under $\bP^M_{\Spec(R)}\lra\Spec(R)$ of a proper closed subset $E_1$
  of $\Spec(R)$. Similarly, let $\mathcal{C}$ be the closed subset
  defined by the intersection of $Z(\fb')$ with the Zariski closure of
  $\alpha$ in $\bP^M_{\Spec(R)}$. Because $\alpha\in X$, then
  $\mathcal{C}$ is contained in the pullback under
  $\bP^M_{\Spec(R)}\lra\Spec(R)$ of a proper closed subset $E_2$ of
  $\Spec(R)$. Let $U' = \Spec R \setminus (E_1 \cup E_2)$, let $\cX'$
  be the restriction of $\cX$ above $U'$, and let $\Phi_{U'}$ be the
  base extension of $\Phi$ to an endomorphism of $\cX'$.
  
  There is an open subset of $\cX'$ on which the restriction of the
  projection map to $\cX'\lra U'$ is smooth, by \cite[Remark VII.1.2,
  page 128]{AK} and there is an open subset of $\cX'$ on which
  $\Phi_{U'}$ is unramified by \cite[Proposition VI.4.6, page
  116]{AK}. Also, \cite[Theorem (2.10)]{vdD} shows that the condition
  of being geometrically irreducible is a first order property which
  is thus inherited by fibers above a dense open subset of $\Spec(R)$.
  Since each of these open sets contains the generic fiber, the
  complement of their intersection must be contained in the pullback
  under $\cX' \lra U'$ of a proper closed subset $E_3$ of $\Spec(R)$.
  Let $U:=U'\setminus E_3$; then letting $\cX_U$ be the restriction of
  $\cX'$ above $U$ yields the model and the endomorphism $\Phi_U$
  (which is the restriction of $\Phi_{U'}$ above $U$) with the desired
  properties.
\end{proof}

The following result is an easy consequence of \cite[Lemma 3.1]{Bell} (see also \cite{Lech}).  

\begin{prop}
\label{embedding again}
There exists a prime $p\ge 5$, an embedding of $R$ into $\bZ_p$, and a
$\bZ_p$-scheme $\cX_{\bZ_p}$ such that
\begin{enumerate}
\item $\cX_{\bZ_p}$ is smooth and quasiprojective over $\bZ_p$, and its
  generic fiber equals $X$;
\item both the generic and the special fiber of $\cX_{\bZ_p}$ are geometrically irreducible;
\item $\Phi$ extends to an unramified endomorphism $\Phi_{\bZ_p}$ of $\cX_{\bZ_p}$; and
\item $\alpha$ extends to a section $ \Spec \bZ_p \lra \cX_{\bZ_p}$.
\end{enumerate}
\end{prop}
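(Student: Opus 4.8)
The plan is to combine the conclusion of Proposition~\ref{extension to U} with a standard "specialization" argument that embeds a finitely generated domain into $\bZ_p$ in a way compatible with a chosen open subset. Concretely, by Proposition~\ref{extension to U} we have a dense open $U\subseteq\Spec(R)$ over which $\cX_U$ is smooth and quasiprojective with geometrically irreducible fibers, $\Phi_U$ is unramified, and $\alpha$ extends to a section. It suffices to produce a prime $p\ge 5$ and a ring homomorphism $R\hookrightarrow\bZ_p$ whose induced map $\Spec\bZ_p\lra\Spec R$ sends both the closed point and the generic point of $\Spec\bZ_p$ into $U$; base-changing $\cX_U$ along such a map then yields $\cX_{\bZ_p}$, and properties (i)--(iv) are inherited from the corresponding properties over $U$ (smoothness, being unramified, and sections are all preserved by base change, and geometric irreducibility of the two fibers is exactly the statement that the images of both points lie in $U$).

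Next I would reduce this to the combinatorial input of \cite[Lemma 3.1]{Bell}. Write the complement $\Spec(R)\setminus U = Z(I)$ for some ideal $I\subseteq R$; since $U$ is dense and nonempty, $I\ne 0$, so we may pick a nonzero element $h\in I$, and it is enough to find $p\ge 5$ and an embedding $R\hookrightarrow\bZ_p$ under which $h$ maps to a unit of $\bZ_p$ (then the closed point misses $Z(I)$, hence lies in $U$; the generic point of $\Spec\bZ_p$ maps to the generic point of $\Spec R$, which lies in $U$ automatically since $U$ is dense in the irreducible $\Spec R$ — note $R$ is a domain because $X$ is a variety, so $\Spec R$ is integral). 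The Lech–Bell specialization lemma provides exactly this: for all but finitely many primes $p$ there is an embedding of the finitely generated $\bZ$-algebra $R$ into $\bZ_p$, and after further excluding the finitely many $p$ dividing a suitable norm one arranges that the finitely many prescribed nonzero elements (here, $h$, and if needed the coefficients distinguishing the generators so that the generic fiber really is $X$) remain units; discarding in addition $p=2,3$ leaves $p\ge 5$ available.

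The only genuinely delicate point is bookkeeping rather than mathematics: one must check that the base change of $\cX_U$ along $\Spec\bZ_p\lra U\subseteq\Spec R$ really does have generic fiber equal to $X$ (not merely a twist of it), which is automatic since the construction of $\cX_U\subseteq\bP^M$ was via the \emph{same} polynomials $F_i,G_i$ over $R$ that cut out $X$ over $\bC$, and the embedding $R\hookrightarrow\bZ_p$ sends those coefficients to the evident ones; and that the special fiber is geometrically irreducible, which holds because the closed point of $\Spec\bZ_p$ has been forced into $U$, where all fibers are geometrically irreducible by Proposition~\ref{extension to U}(ii). So there is no real obstacle here; the proposition follows formally from Proposition~\ref{extension to U} together with \cite[Lemma 3.1]{Bell}, and this is why the paper phrases it as "an easy consequence."
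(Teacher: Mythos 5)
Your proposal is correct and follows essentially the same route as the paper: invoke Proposition~\ref{extension to U}, then use \cite[Lemma 3.1]{Bell} to embed $R$ into $\bZ_p$ (for some $p\ge 5$) so that the closed point of $\Spec\bZ_p$ lands in $U$, and conclude by base change. The only cosmetic difference is that the paper first localizes to an affine chart $\Spec R[f^{-1}]\subseteq U$ and makes $f$ a unit via the generators of $R[f^{-1}]$, whereas you make a chosen element of the ideal of the complement a unit directly; these are interchangeable.
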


\begin{proof}[Proof of Proposition~\ref{embedding again}.]
  With the notation as in Proposition~\ref{extension to U}, since $U$
  is a dense open subset of $\Spec(R)$, there exists a nonzero element
  $f\in R$ which is contained in all prime ideals of $\Spec R
  \setminus U$.  Therefore, there is an open affine subset of $U$ that
  is isomorphic to $\Spec(R[f^{-1}])$ (note that $R$ is an integral
  domain).  Since $R$ is finitely generated as a ring, then
  $R[f^{-1}]$ is as well and we may write $R[f^{-1}] := \bZ[u_1,
  \dots, u_e]$ for some nonzero elements $u_i$ of $\Frac(R)$.  By
  \cite[Lemma $3.1$]{Bell}, there is a prime $p$ such that $\Frac(R)$
  embeds into $\bQ_p$ in a way that sends all of the $u_i$ into
  $\bZ_p$, and $6$ is a unit in $\bZ_p$ (in particular, this last
  condition yields that $p\ge 5$). Thus, we obtain a map $\Spec \bZ_p
  \lra \Spec R[f^{-1}] \lra U$.  We let
  $\cX_{\bZ_p}:=\cX_{U}\times_{U}\Spec(\bZ_p)$ and let $\Phi_{\bZ_p}$
  be the base extension of $\Phi_U$ to an endomorphism of
  $\cX_{\bZ_p}$. The prime $(p)$ in $\bZ_p$ pulls back to a prime
  $\fq$ of $R[f^{-1}]$, so there is an isomorphism between
  $R[f^{-1}]/\fq$ and $\bZ_p / (p)$.  The fiber of $\cX_U$ is
  geometrically irreducible at $\fq$ by Proposition~\ref{extension to
    U}, so it follows that the fiber of $\cX_{\bZ_p}$ at $(p)$ is
  geometrically irreducible as well.  Since the properties of being
  quasiprojective, smooth, and unramified are all preserved by base
  extension (see Propositions VI.3.5 and VII.1.7 of \cite{AK}), our
  proof is complete (note that the embedding of $R[f^{-1}]$ into
  $\bZ_p$ automatically extends $\alpha$ to a section
  $\Spec(\bZ_p)\lra \cX_{\bZ_p}$).
\end{proof}  

For the sake of simplifying the notation, we let $\cX:=\cX_{\bZ_p}$
and let $\Phi$ denote the $\bZ_p$-endomorphism $\Phi_{\bZ_p}$ of
$\cX_{\bZ_p}$ constructed in Proposition~\ref{embedding again}. Also,
we use $\alpha$ to denote the section $\Spec(\bZ_p)\lra \cX(\bZ_p)$
induced in Proposition~\ref{embedding again};
i.e. $\alpha\in\cX(\bZ_p)$. Finally, we use $\cV=\cV_{\bZ_p}$ to
denote the $\bZ_p$-scheme which is the Zariski closure in $\cX$ of the
original subvariety $V$ of $X$.

Since the special fiber $\bcX$ of $\cX$ has finitely many $\bF_p$ points,
some iterate of $\alpha$ under $\Phi$ is in a periodic residue class
modulo $p$. At the expense of replacing $\alpha$ by a suitable
iterate under $\Phi$, we may assume that the residue class of
$\alpha$ is $\Phi$-periodic, say of period $N$ (note that replacing
$\alpha$ by one of its iterates under $\Phi$ will not change the
conclusion of Theorem~\ref{main result}). For each $j=0,\dots,N-1$, we
let $\vec{\alpha_j}:=\iota_j(\Phi^j(\alpha))\in\bZ_p^g$, where
$\iota_j^{-1}$ is the analytic bijection (defined as in
Section~\ref{geometry}) between $\bZ_p^g$ and the set of points of
$\cX(\bZ_p)$ which have the same reduction modulo $p$ as
$\Phi^j(\alpha)$.

Fix $j\in\{0,\dots,N-1\}$. Let $\cF_j:\bZ_p^g\lra\bZ_p^g$ be the
analytic function constructed in Proposition~\ref{power} corresponding
to $\Phi^N$ and to the residue class of $\Phi^j(\alpha)$; this
function satisfies
\begin{equation}
\label{logarithm}
\iota_j(\Phi^N(\Phi^j(\alpha)))=\cF_j(\vec{\alpha_j}).
\end{equation}
As proved in Proposition~\ref{pI}, there exists a positive integer $M_j$ such that
\begin{equation}
\label{congruence needed}
\cF_j^{M_j}(\vec{\beta})\equiv\vec{\beta}\pmod{p},
\end{equation}
for each $\vec{\beta}\in\bZ_p^g$.
Let $\ell\in\{0,\dots,M_j-1\}$ be fixed. Iterating \eqref{logarithm} yields 
\begin{equation}
\label{expanded}
\iota_j\left(\Phi^{N(M_jk+\ell)+j}(\alpha)\right)=\cF_j^{M_jk}(\cF_j^{\ell}(\vec{\alpha_j})) 
\end{equation}
for each $k\in\N$.  Using \eqref{congruence needed} (together with
Remark~\ref{divisible by many}) along with the fact that $p\ge 5$,
Theorem~\ref{thm: padic} implies that for each $j=0,\dots,N-1$ and for
each $\ell=0,\dots,M_j-1$, there exists an analytic function
$U_{j,\ell}:\bZ_p\lra\bZ_p^g$ such that
\begin{equation}
\label{ecu:1}
U_{j,\ell}(0)=\cF_j^{\ell}(\vec{\alpha_j})\in\bZ_p^g
\end{equation}
and
\begin{equation}
\label{ecu:2}
U_{j,\ell}(z+1)=\cF_j^{M_j}(U_{j,\ell}(z)),
\end{equation}
for each $z\in\bZ_p$. By \eqref{ecu:1} and \eqref{ecu:2}, we thus obtain
$$U_{j,\ell}(k)=\cF_j^{M_jk}(\cF_j^{\ell}(\vec{\alpha_j})),$$
for each $k\in\N$. Hence, \eqref{expanded} gives
\begin{equation}
\label{close to final}
U_{j,\ell}(k)=\iota_j\left(\Phi^{N(M_jk+\ell)+j}(\alpha)\right),
\end{equation}
for each $k\in\N$.  Then for each polynomial
$h\in\bZ_p[x_1,\dots,x_M]$ in the vanishing ideal of $\cV$, the
analytic function $h\circ \iota_j^{-1} \circ U_{j,\ell}$ defined on
$\bZ_p$ has infinitely many zeros $k\in\bZ_p$ if and only if it
vanishes identically (see \cite[Section $6.2.1$]{Robert}). Thus, for
each $j=0,\dots,N-1$ and each $\ell=0,\dots,M_j-1$, either
$$\cV(\bZ_p)\cap\cO_{\Phi^{NM_j}}(\Phi^{N\ell+j}(\alpha))\text{ is finite,}$$
or
$$\cO_{\Phi^{NM_j}}(\Phi^{N\ell+j}(\alpha))\subset \cV(\bZ_p).$$
This concludes the proof of Theorem~\ref{main result}.
\end{proof}

Now we extend the proof of Theorem~\ref{main result} to cover the case
of non-smooth varieties $X$, and thus prove our main result.
\begin{proof}[Proof of Theorem~\ref{real main result}.]
  First we show that it suffices to assume $X$ is irreducible. Indeed,
  since $\Phi$ is \'{e}tale, it permutes the irreducible components
  of $X$; hence, there exists a positive integer $N$ such that for
  each irreducible component $Y$ of $X$, the restriction of $\Phi^N$
  on $Y$ is an \'{e}tale endomorphism of $Y$. For each
  $\ell=0,\dots,(N-1)$, we let $Z_{\ell}$ be an irreducible component
  of $X$ containing $\Phi^{\ell}(\alpha)$. Then for each
  $\ell=0,\dots,(N-1)$ we have
$$V(\C)\cap \cO_{\Phi^N}(\Phi^{\ell}(\alpha))=(V\cap Z_{\ell})(\C)\cap \cO_{\Phi^N}(\Phi^{\ell}(\alpha)).$$
Since $\cO_{\Phi^N}(\Phi^{\ell}(\alpha))\subset Z_{\ell}$ and
$Z_{\ell}$ is irreducible, we are done.

We proceed now by induction on $\dim(X)$. If $\dim(X)=0$, there is nothing to prove.

Assume $\dim(X)=d\ge 1$, and that our result holds for all varieties
of dimension less than $d$. As shown above, we may assume $X$ is
irreducible. An \'{e}tale map sends smooth points into smooth points,
and non-smooth points into non-smooth points (because it induces an
isomorphism on the tangent space at each point).  If $\alpha$ (and
therefore $\cO_{\Phi}(\alpha)$) is in the smooth locus $X_{\smooth}$
of $X$, then Theorem~\ref{main result} finishes our proof because
$X_{\smooth}$ is also irreducible. If $\cO_{\Phi}(\alpha)$ is
contained in $X_1:=X\setminus X_{\smooth}$, then the inductive
hypothesis finishes our proof, because $\dim(X_1)<d$ and the
restriction of $\Phi$ to $X_1$ is \'{e}tale, since the base extension
of an \'etale morphism is \'etale (see \cite[VI.4.7.(iii)]{AK}).
\end{proof}

\section{Applications}
\label{applications}

We prove a Mordell-Lang type statement for automorphisms of any Noetherian integral scheme, which has several interesting consequences.

First, we define the \emph{full} orbit of a point $\alpha\in X(\C)$ under an automorphism $\Phi:X\lra X$ as the set 
$$\cOO_{\Phi}(\alpha):=\{\Phi^n(\alpha)\text{ : }n\in\Z\}.$$

\begin{theorem}
\label{automorphisms}
Let $X$ be any quasiprojective variety defined over $\C$, and let $\Phi:X\lra X$ be an automorphism. 
Then for each $\alpha\in X(\C)$, and for each subvariety $V\subset X$, the intersection $V(\C)\cap \cOO_{\Phi}(\alpha)$ is a union of at most finitely many points and at most finitely many full orbits of the form $\cOO_{\Phi^k}(\Phi^{\ell}(\alpha))$, for some positive integers $k$ and $\ell$.
\end{theorem}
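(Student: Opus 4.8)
The plan is to derive Theorem~\ref{automorphisms} from Theorem~\ref{real main result} by breaking the full orbit into its forward and backward halves. Since $\Phi$ is an automorphism, both $\Phi$ and $\Phi^{-1}$ are isomorphisms, hence \'{e}tale endomorphisms of $X$, and one has $\cOO_{\Phi}(\alpha)=\cO_{\Phi}(\alpha)\cup\cO_{\Phi^{-1}}(\alpha)$ because $\{\Phi^n(\alpha):n\le 0\}=\{(\Phi^{-1})^n(\alpha):n\ge 0\}$. Applying Theorem~\ref{real main result} to $\Phi$ expresses $V(\C)\cap\cO_{\Phi}(\alpha)$ as a union of finitely many orbits $\cO_{\Phi^{N_i}}(\Phi^{\ell_i}(\alpha))$, and applying it to $\Phi^{-1}$ expresses $V(\C)\cap\cO_{\Phi^{-1}}(\alpha)$ as a union of finitely many orbits $\cO_{\Phi^{-M_j}}(\Phi^{-m_j}(\alpha))$. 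The pieces with $N_i=0$ (resp. $M_j=0$) are single points; there are finitely many of them, and they account for the finitely many points allowed in the conclusion.

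The remaining task is to show that each \emph{infinite} forward or backward orbit $\cO_{\Psi^N}(\beta)$ contained in $V$ (here $\Psi=\Phi$ or $\Phi^{-1}$, $N\ge 1$, and $\beta$ a suitable iterate of $\alpha$) is in fact contained in $V$ together with its entire full $\Psi^N$-orbit. To prove this I would let $W$ be the Zariski closure of $\cO_{\Psi^N}(\beta)$, so that $W\subseteq V$; since $\Psi^N$ is a homeomorphism, $\Psi^N(W)$ is the closure of $\{\Psi^{kN}(\beta):k\ge 1\}$, whence $\Psi^N(W)\subseteq W$. Iterating gives a descending chain $W\supseteq\Psi^N(W)\supseteq\Psi^{2N}(W)\supseteq\cdots$ of closed subsets, which stabilizes because $X$ is Noetherian, say to a closed set $W'=\Psi^{rN}(W)$ with $\Psi^N(W')=W'$. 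Then both $\Psi^N$ and $\Psi^{-N}$ map $W'$ onto $W'$, and since $\Psi^{rN}(\beta)\in W'$ we get $\Psi^{kN}(\beta)\in W'$ for every $k\in\Z$; thus $\cOO_{\Psi^N}(\beta)\subseteq W'\subseteq W\subseteq V$. Applying this to each infinite piece coming from $\Phi$ and from $\Phi^{-1}$ — and using $\cOO_{\Phi^{-M}}(\cdot)=\cOO_{\Phi^{M}}(\cdot)$ together with the fact that the base point of a full $\Phi^{k}$-orbit may be replaced by any $\Phi^{kt}$-translate (so that it can be taken of the form $\Phi^{\ell}(\alpha)$ with $\ell$ a positive integer) — shows that $V(\C)\cap\cOO_{\Phi}(\alpha)$ is a union of finitely many full orbits $\cOO_{\Phi^{k}}(\Phi^{\ell}(\alpha))$ and finitely many points.

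The only content beyond a formal appeal to Theorem~\ref{real main result} is the descending-chain argument, which I do not expect to cause any difficulty; the one point to be careful about is to run the chain in the contracting direction $W\supseteq\Psi^N(W)\supseteq\cdots$ rather than the expanding one, since only the contracting chain is guaranteed to stay inside $V$. A minor bookkeeping matter is the re-indexing needed to write each surviving full orbit with positive exponents $k$ and $\ell$, which is harmless.
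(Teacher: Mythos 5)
Your proposal is correct and follows essentially the same route as the paper: split $\cOO_{\Phi}(\alpha)$ into the forward orbits of $\Phi$ and $\Phi^{-1}$, apply Theorem~\ref{real main result} to each, and then show that the Zariski closure of any infinite forward orbit of an automorphism is invariant, so that the whole two-sided orbit lies in $V$. The only (harmless) difference is in the proof of that invariance claim: the paper argues that the closures $\Psi^n(W)$ eventually stabilize because all but finitely many iterates lie in positive-dimensional components of $W$, whereas you stabilize the descending chain $W\supseteq\Psi^N(W)\supseteq\cdots$ directly by Noetherianity --- an equally valid, slightly cleaner variant of the same idea.
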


\begin{proof}
Because $\Phi$ is \'{e}tale, we may apply Theorem~\ref{main result} and derive that the intersection $\cO_{\Phi}(\alpha)\cap V(\C)$ is a union of at most finitely many points and at most finitely many orbits of the form $\cO_{\Phi^k}(\Phi^{\ell}(\alpha))$ for some $k,\ell\in\N$ with $k\ge 1$. Same conclusion holds if we intersect $V$ with the orbit $\cO_{\Phi^{-1}}(\alpha)$. So, in order to prove Theorem~\ref{automorphisms} it suffices to show the following Claim.
\begin{claim}
\label{back and forth}
Let $Y$ be any quasiprojective variety defined over a field $K$ of arbitrary characteristic, let $\Psi:Y\lra Y$ be any automorphism, and let $\beta\in Y(K)$. Then the Zariski closure $W$ of $\cO_{\Psi}(\beta)$ contains $\cOO_{\Psi}(\beta)$.
\end{claim}

\begin{proof}[Proof of Claim~\ref{back and forth}.]
If $\beta$ is $\Psi$-periodic, then the conclusion is immediate because $\cO_{\Psi}(\beta)=\cOO_{\Psi}(\beta)$. 

Assume now that $\cO_{\Psi}(\beta)$ is infinite.
For each $n\in\N$, we let $W_n:=\Psi^n(Y)$. Then $W_n$ is the Zariski closure of the set of all $\Psi^m(\beta)$ for $m\ge n$, and so, $W_n$ contains all positive dimensional irreducible components of $W$. On the other hand, $W_n\subset W$ and  $\Psi^m(\beta)\in W\setminus W_n$ if and only if $m<n$ and $\Psi^m(\beta)$ is not contained in a positive dimensional irreducible component of $W$. Because $W$ has finitely many irreducible components, there exists $N\in\N$ such that for each $n\ge N$, the point $\Psi^n(\beta)$ is contained in a positive dimensional irreducible component of $W$. Therefore $W_{N+1}=W_N$, which means that $\Psi^{N+1}(W)=\Psi^N(W)$, and using the fact that $\Psi$ is an automorphism, we conclude that $\Psi(W)=W$. In particular, this shows that $\cOO_{\Psi}(\beta)\subset W$, as desired.
\end{proof}
We apply the conclusion of Claim~\ref{back and forth} to any automorphism $\Phi^k$ (for some nonzero integer $k$) and to any point $\Phi^{\ell}(\alpha)$ (for some integer $\ell$) such that $\cO_{\Phi^k}(\Phi^{\ell}(\alpha))\subset V$; this yields the conclusion of Theorem~\ref{automorphisms}.
\end{proof}

Special cases of Theorem~\ref{automorphisms} that have been previously
treated include the following:
\begin{enumerate}
\item $X$ is any commutative algebraic group, $\Phi$ is the
  translation-by-$P$-map on $X$, where $P\in X(\bC)$, and $\alpha=0\in
  X(\C)$ (this is a conjecture by Lang \cite{dio-geo}, which was first
  proved by Cutkosky and Srinivas in \cite[Theorem 7]{SriCut}). In
  particular, when $X$ is a semiabelian variety, we obtain the case of
  cyclic groups in the classical Mordell-Lang conjecture (proved by
  Faltings \cite{Faltings} and Vojta \cite{V1}).
\item $\Phi$ is any automorphism of an affine variety $X$ (this was
  proved by Bell in \cite{Bell}).
\item $\Phi$ is any automorphism of projective space (this was proved
  by Denis in \cite{Denis-dynamical}).
\end{enumerate}

Theorem~\ref{automorphisms} also answers to a question first posed in
\cite[Question 11.6]{KeeRogSta}. The following concept was introduced
in \cite[Definition 3.6]{KeeRogSta} (see also \cite[Section
5]{SriCut}).
\begin{definition}
\label{critically dense}
Let $S$ be an infinite set of closed points of an integral scheme $X$. Then we say that $S$ is critically dense if every infinite subset of $S$ has Zariski closure equal to $X$.
\end{definition}

\begin{question}[Keeler, Rogalski, Stafford]
\label{is critically dense?}
If $X$ is an irreducible quasiprojective variety over a field of characteristic $0$, and $\Phi:X\lra X$ is an automorphism, is every dense orbit $\{\Phi^i(\alpha)\text{ : }i\in\Z\}$ critically dense?
\end{question}

Theorem~\ref{automorphisms} provides a positive answer to
Question~\ref{is critically dense?} because it implies that any
subvariety $V\subset X$ containing infinitely many points of
$\cOO_{\Phi}(\alpha)$ must contain a set of the form
$\cOO_{\Phi^k}(\Phi^{\ell}(\alpha))$ for some $k\ge 1$. So,
$$\cOO_{\Phi}(\alpha)\subset \bigcup_{i=0}^{k-1} \Phi^i(V).$$
Because we assumed that $\cOO_{\Phi}(\alpha)$ is Zariski dense in $X$, we conclude that
$$X\subset\bigcup_{i=0}^{k-1} \Phi^i(V),$$
which can only happen if $V=X$ (because $X$ is
irreducible). Therefore, $\cOO_{\Phi}(\alpha)$ is indeed critically
dense.

A similar argument proves our Corollary~\ref{alternative formulation}.

As noted in the introduction Zhang \cite{ZhangLec} has conjectured
that if $f:X \lra X$ is what he calls a ``polarizable'' map defined
over a number field, then some point $X(K)$ has a dense orbit.  While
automorphisms are not polarizable in general, there are examples of
automorphisms of varieties $X$ for which {\it every} nonpreperiodic
point of $X$ has a dense orbit.  We describe a particular family of
examples below.

\begin{example}
  In \cite{SilK3}, Silverman studies a family of $K3$ surfaces $X$
  which have infinitely many automorphisms.  He considers a group of
  automorphisms $\cA$ generated by involutions $\sigma_1$, $\sigma_2$
  such that $\sigma_1^2 = \sigma_2^2 = \Id$ and $\sigma_1 \sigma_2$ has
  infinite order.  Silverman defines the {\it chain} containing a
  point $\alpha \in X(\bC)$ as the set of all $\tau \alpha$ where
  $\tau \in \cA$ and shows that any infinite chain is Zariski dense in
  $X$ (this is done by showing that there are no curves in $X$ that
  are preperiodic under the action of $\sigma_1 \sigma_2$).  Since the
  chain containing a point $\alpha$ is simply the union of the orbit
  of $\alpha$ under the action of $\sigma_1 \sigma_2$ with the orbit
  of $\sigma_1 \alpha$ under the action of $\sigma_1 \sigma_2$ it
  follows from Corollary~\ref{alternative formulation}, that any
  infinite subset of a chain is Zariski dense in $X$.  In particular,
  the orbit of any point $\alpha \in X(\bC)$ under the action of
  $\sigma_1 \sigma_2$ is critically dense.  It also follows from
  Theorem~\ref{automorphisms} that for any subvariety $V$ of $X$ and
  any point $\alpha \in X(\bC)$, the set of $\tau \in \cA$ such that
  $\tau(\alpha) \in V$ is a union of at most finitely many cosets of subgroups of
  $\cA$.
\end{example}

\def\cprime{$'$} \def\cprime{$'$} \def\cprime{$'$} \def\cprime{$'$}
\providecommand{\bysame}{\leavevmode\hbox to3em{\hrulefill}\thinspace}
\providecommand{\MR}{\relax\ifhmode\unskip\space\fi MR }
\providecommand{\MRhref}[2]{%
  \href{http://www.ams.org/mathscinet-getitem?mr=#1}{#2}
}
\providecommand{\href}[2]{#2}


\begin{thebibliography}{vdDS84}

\bibitem[AK70]{AK}
A.~Altman and S.~Kleiman, \emph{Introduction to {G}rothendieck duality theory},
  Lecture Notes in Mathematics, Vol. 146, Springer-Verlag, Berlin, 1970.

\bibitem[AM69]{AM}
M.~F. Atiyah and I.~G. Macdonald, \emph{Introduction to commutative algebra},
  Addison-Wesley Publishing Co., Reading, Mass.-London-Don Mills, Ont., 1969.

\bibitem[Bel06]{Bell}
J.~P. Bell, \emph{A generalised {S}kolem-{M}ahler-{L}ech theorem for affine
  varieties}, J. London Math. Soc. (2) \textbf{73} (2006), no.~2, 367--379.

\bibitem[BG06]{BG}
E.~Bombieri and W.~Gubler, \emph{Heights in {D}iophantine geometry}, New
  Mathematical Monographs, vol.~4, Cambridge University Press, Cambridge, 2006.

\bibitem[BGKT]{Par}
R.~L. Benedetto, D.~Ghioca, P.~Kurlberg, and T.~J. Tucker, \emph{The dynamical
  {M}ordell-{L}ang conjecture}, submitted for publication, 2007, available
  online at {\tt http://arxiv.org/abs/0712.2344}.

\bibitem[Bou06]{Bour2}
N.~Bourbaki, \emph{\'{E}l\'ements de math\'ematique. {A}lg\`ebre commutative.
  {C}hapitres 8 et 9}, Springer, Berlin, 2006, Reprint of the 1983 original.

\bibitem[CS93]{SriCut}
S.~D. Cutkosky and V.~Srinivas, \emph{On a problem of {Z}ariski on dimensions
  of linear systems}, Ann. of Math. (2) \textbf{137} (1993), no.~3, 531--559.

\bibitem[Den94]{Denis-dynamical}
L.~Denis, \emph{G\'{e}om\'{e}trie et suites r\'{e}currentes}, Bull. Soc. Math.
  France \textbf{122} (1994), no.~1, 13--27.

\bibitem[Fal94]{Faltings}
G.~Faltings, \emph{The general case of {S}. {L}ang's conjecture}, Barsotti
  Symposium in Algebraic Geometry (Abano Terme, 1991), Perspect. Math., no.~15,
  Academic Press, San Diego, CA, 1994, pp.~175--182.

\bibitem[GT]{newlog}
D.~Ghioca and T.~J. Tucker, \emph{Periodic points, linearizing maps, and the
  dynamical mordell-lang problem}, submitted for publication, 2008, available
  online at {\tt http://arxiv.org/abs/0805.1560}.

\bibitem[GTZ08]{Mike}
D.~Ghioca, T.~J. Tucker, and M.~Zieve, \emph{Intersections of polynomial
  orbits, and a dynamical {M}ordell-{L}ang conjecture}, Invent. Math.
  \textbf{171} (2008), no.~2, 463--483.

\bibitem[Har77]{H}
R.~Hartshorne, \emph{Algebraic geometry}, Springer-Verlag, New York, 1977.

\bibitem[HY83]{HY}
M.~Herman and J.-C. Yoccoz, \emph{Generalizations of some theorems of small
  divisors to non-archimedean fields}, Geometric dynamics (Rio de Janeiro,
  1981), Lecture Notes in Math., no. 1007, Springer-Verlag, Berlin, 1983,
  pp.~408--447.

\bibitem[KRS05]{KeeRogSta}
D.~S. Keeler, D.~Rogalski, and J.~T. Stafford, \emph{Na\"{i}ve noncommutative
  blowing up}, Duke Math. J. \textbf{126} (2005), no.~3, 491--546.

\bibitem[Lan83]{dio-geo}
S.~Lang, \emph{Fundamentals of diophantine geometry}, Springer-Verlag, New
  York, 1983.

\bibitem[Lec53]{Lech}
C.~Lech, \emph{A note on recurring series}, Ark. Mat. \textbf{2} (1953),
  417--421.

\bibitem[Mah35]{Mahler-2}
K.~Mahler, \emph{Eine arithmetische {E}igenshaft der {T}aylor-{K}oeffizienten
  rationaler {F}unktionen}, Proc. Kon. Nederlandsche Akad. v. Wetenschappen
  \textbf{38} (1935), 50--60.

\bibitem[Mah58]{MahlerK1}
\bysame, \emph{An interpolation series for continuous functions of a {$p$}-adic
  variable}, J. Reine Angew. Math. \textbf{199} (1958), 23--34.

\bibitem[Mah61]{MahlerK2}
\bysame, \emph{A correction to the paper ``an interpolation series for
  continuous functions of a {$p$}-adic variable''}, J. Reine Angew. Math.
  \textbf{208} (1961), 70--72.

\bibitem[Mat86]{Mats2}
H.~Matsumura, \emph{Commutative ring theory}, Cambridge Studies in Advanced
  Mathematics, vol.~8, Cambridge University Press, Cambridge, 1986, Translated
  from the Japanese by M. Reid.

\bibitem[Ray83a]{Raynaud1}
M.~Raynaud, \emph{Courbes sur une vari\'{e}t\'{e} ab\'{e}lienne et points de
  torsion}, Invent. Math. \textbf{71} (1983), no.~1, 207--233.

\bibitem[Ray83b]{Raynaud2}
\bysame, \emph{Sous-vari\'{e}t\'{e}s d'une vari\'{e}t\'{e} ab\'{e}lienne et
  points de torsion}, Arithmetic and geometry, vol. I, Progr. Math., vol.~35,
  Birkh\"{a}user, Boston, MA, 1983, pp.~327--352.

\bibitem[RL03]{Riv1}
J.~Rivera-Letelier, \emph{Dynamique des fonctions rationnelles sur des corps
  locaux}, Ast\'erisque (2003), no.~287, 147--230, Geometric methods in
  dynamics. II.

\bibitem[Rob00]{Robert}
A.~M. Robert, \emph{A course in {$p$}-adic analysis}, Graduate Texts in
  Mathematics, vol. 198, Springer-Verlag, New York, 2000.

\bibitem[Sha77]{Shaf}
I.~R. Shafarevich, \emph{Basic algebraic geometry}, study ed., Springer-Verlag,
  Berlin, 1977, Translated from the Russian by K. A. Hirsch, Revised printing
  of Grundlehren der mathematischen Wissenschaften, Vol. 213, 1974.

\bibitem[Sil91]{SilK3}
J.~H. Silverman, \emph{Rational points on {$K3$} surfaces: a new canonical
  height}, Invent. Math. \textbf{105} (1991), no.~2, 347--373.

\bibitem[Sko34]{Skolem}
T.~Skolem, \emph{Ein {V}erfahren zur {B}ehandlung gewisser exponentialer
  {G}leichungen und diophantischer {G}leichungen}, C. r. 8 congr. scand. \`{a}
  Stockholm (1934), 163--188.

\bibitem[Ull98]{Ullmo}
E.~Ullmo, \emph{Positivit\'e et discr\'etion des points alg\'ebriques des
  courbes}, Ann. of Math. (2) \textbf{147} (1998), no.~1, 167--179.

\bibitem[vdDS84]{vdD}
L.~van~den Dries and K.~Schmidt, \emph{Bounds in the theory of polynomial rings
  over fields. {A} nonstandard approach}, Invent. Math. \textbf{76} (1984),
  no.~1, 77--91.

\bibitem[Voj96]{V1}
P.~Vojta, \emph{Integral points on subvarieties of semiabelian varieties. {I}},
  Invent. Math. \textbf{126} (1996), no.~1, 133--181.

\bibitem[Zha98]{Zhang}
S.~Zhang, \emph{Equidistribution of small points on abelian varieties}, Ann. of
  Math. (2) \textbf{147} (1998), no.~1, 159--165.

\bibitem[Zha06]{ZhangLec}
S.~Zhang, \emph{Distributions in {A}lgebraic {D}ynamics}, Survey in
  Differential Geometry, vol.~10, International Press, 2006, pp.~381--430.

\end{thebibliography}

\end{document}